\crefname{hypothesis}{Hypothesis}{Hypotheses}
\title{Convergence to Second-Order Stationarity for Constrained Non-Convex Optimization}
\author{Maher  Nouiehed\thanks{
Department of Industrial Engineering and Management, American University of Beirut, Beirut, Lebanon 
  (\email{mn102@aub.edu.lb}).}
\and Jason Lee\thanks{Department of Electrical Engineering, Princeton University, Princeton, NJ
  (\email{jasonlee@princeton.edu}).}
  \and Meisam Razaviyayn\thanks{Department of Industrial and Systems Engineering, University of Southern California, Los Angeles, CA 
  (\email{razaviya@usc.edu}).} }
\def\BState{\State\hskip-\ALG@thistlm}
\algnewcommand{\IfThenElse}[3]{% \IfThenElse{<if>}{<then>}{<else>}
	\State \algorithmicif\ #1\ \algorithmicthen\ #2\ \algorithmicelse\ #3}
\DeclareMathOperator*{\argmin}{arg\,min}
\newcommand{\bzero}{\mathbf{0}}
\newcommand{\st}{{\rm s.t.}}
\newcommand{\cX}{{\cal X}}
\newcommand{\cpsi}{{\cal \psi}}
\newcommand{\bx}{\mathbf{x}}
\newcommand{\bbx}{\bar{\mathbf{x}}}
\newcommand{\btx}{\widetilde{\mathbf{x}}}
\newcommand{\bQ}{{\mathbf{Q}}}
\newcommand{\bA}{{\mathbf{A}}}
\newcommand{\bI}{{\mathbf{I}}}
\newcommand{\bones}{{\mathbf{1}}}
\newcommand{\bs}{\mathbf{s}}
\newcommand{\bd}{\mathbf{d}}
\newcommand{\bv}{\mathbf{v}}
\newcommand{\ball}{{\rm I\!B}}
\newtheorem{assumption}[theorem]{Assumption}
\begin{document}
	\maketitle
	
	\begin{abstract}
		We consider the problem of finding an approximate second-order stationary point of a constrained non-convex optimization problem. 
		We first show that, unlike the gradient descent method for unconstrained optimization, the vanilla projected gradient descent algorithm may converge to a strict saddle point even when there is only a single linear  constraint. We then provide a hardness result by showing that checking $(\epsilon_g,\epsilon_H)$-second order stationarity is NP-hard even in the presence of linear constraints. Despite our hardness result, we identify instances of the problem for which checking  second order stationarity can be done efficiently. For such instances, we propose a dynamic second order Frank--Wolfe algorithm which converges to ($\epsilon_g, \epsilon_H$)-second order stationary points in ${\mathcal{O}}(\max\{\epsilon_g^{-2}, \epsilon_H^{-3}\})$ iterations. The proposed algorithm can be used in general constrained non-convex optimization as long as the constrained quadratic sub-problem can be solved efficiently.  
	\end{abstract}
	
	\section{Introduction}
	
	Designing efficient algorithms for non-convex optimization has recently been an active area of research, see \cite{anandkumar2016efficient, cartis2012adaptive, cartis2011adaptive, cartis2011adaptive-2, cartis2013evaluation, cartis2015evaluation,  cartis2017second, curtis2017inexact,curtis2017trust,   lu2012trust,  nesterov2006cubic}. For a general non-convex problem, even finding a local optimum of the objective function is in general NP-Hard \cite{murty1987some}. Therefore, in practice, most existing algorithms converge to first or second order stationary points. The latter provides stronger guarantees as it constitutes a smaller subset of the critical points of the objective function that includes local and global optima. Moreover, when applied to functions with ``nice'' geometrical properties, the set of second order stationary points could even be the same as the set of global optima; see \cite{bandeira2016low, barazandeh2018behavior, boumal2016nonconvex, nouiehed2018learning, sun2016geometric, sun2017complete-2,  sun2017complete} for examples of such objective functions. \\

	Most existing algorithms used for finding second order stationary points focus on  \textit{unconstrained} optimization problems. As a first order algorithm, \cite{ge2015escaping} proposes a noisy-stochastic gradient descent method and shows convergence to the set of second-order stationary points, almost surely. Similarly, \cite{lee2016gradient} uses stable manifold theorem to show that gradient descent with random initialization and sufficiently small constant step size converges to the latter set of second-order stationary points. More specifically, they show that the set of initial points that converge to a \textit{strict saddle point} of the objective function is a measure zero set. Under the assumption that \textit{strict saddle property} holds, the latter methods converge to a local optimum of the objective function. As a negative result, \cite{du2017gradient} constructs an example where the simple gradient descent can take exponential number of iterations to escape a strict saddle point. Motivated by this example, \cite{jin2017escape} proposes a perturbed form of gradient descent that can efficiently escape saddle points under strict saddle property.\\

	%	a local optimum of the objective function under the assumption that \textit{strict saddle property} holds. Similarly, \cite{lee2016gradient} uses stable manifold theorem to show that gradient descent with random initialization and sufficiently small constant step size converges to the set of second-order stationary points of the objective function, almost surely. More specifically, they show that the set of initial points that converge to a \textit{strict saddle point} of the objective function is a measure zero set.  
	
	% and converges to an $\epsilon$-first-order stationary point in $\mathcal{O}(\epsilon^{-3/2})$ iterations.\\ 
	
	Using higher order derivative information of the objective function, \cite{cartis2011adaptive, cartis2011adaptive-2, curtis2017trust, nesterov2006cubic,royer2018complexity} propose trust region or cubic regularization methods for finding second order stationary points for unconstrained optimization problems. More specifically, the traditional trust region method \cite[Algorithm~6.1.1]{conn2000trust} and cubic regularization methods, which are based on the work of \cite{griewank1981modification, nesterov2006cubic}, are able to converge to second order stationary points. Moreover,  \cite{cartis2011adaptive, cartis2011adaptive-2} proposed the Adaptive Regularization framework using Cubics, also known as (ARC), and established convergence to a second order stationary point. This method computes at each iteration an (approximate) global optimum for a local cubic model which resembles the behavior of the original objective function. They show that  ARC  requires ${\cal O}(\epsilon^{-3/2})$ iterations to converge to an $\epsilon$-first-order stationary point, and ${\cal O}(\epsilon^{-3})$ iterations to reach an $\epsilon$-second-order stationary point. Motivated by these rates, \cite{curtis2017trust} designed a trust region method, entitled TRACE, which has the same iteration complexity bound as  ARC. TRACE  alters the acceptance criteria adopted in the traditional trust region method, and introduces a new mechanism for updating the radius of the trust region. In a more recent work, \cite{curtis2017exploiting} developed an algorithm with a dynamic choice for direction and step-size. In particular, the dynamic algorithm decides on the step that offers a more significant predicted reduction in the objective value. The dynamic algorithm, ARC and TRACE methods fall under a more general framework that was proposed in \cite{curtis2017inexact}. This framework uses a set of generic conditions that need to be satisfied at each trial step, and converges to second order stationarity with optimal iteration complexity bound.\\
	
	For constrained optimization problems, many recent papers propose algorithms that converge to first-order and second-order stationary points. For example, \cite{lacoste2016convergence} proposed a Frank--Wolfe algorithm that converges to an $\epsilon$-first order stationary point with complexity ${\cal O}(\epsilon^{-2})$. Another work by \cite{ghadimi2016mini} shows that projected gradient descent converges to an $\epsilon$-first order stationary point with the same complexity bound. Convergence to second-order stationary points can be achieved by extending some of the aforementioned second or third order methods. For instance, \cite{cartis2012adaptive} adapted the ARC method and showed that the worst-case function evaluation complexity for converging to an $\epsilon$-first order stationary point is ${\cal O}(\epsilon^{-3/2})$. Moreover, \cite{cartis2015evaluation} showed that the same rate of convergence can be achieved for solving general smooth problems involving both non-convex equality and inequality constraints, using a cubic regularization method. In addition, a \textit{conceptual} trust region method was proposed in \cite{cartis2017second} to compute an $\epsilon$-approximate $q$-th order stationary point in at most ${\cal O}(\epsilon^{-q-1})$ iterations. The iteration complexity bounds computed for these methods \textit{hide the per-iteration complexity of solving the sub-problem}. These sub-problems are either quadratic or cubic constrained optimization problems, which are in general NP-complete; see section~\ref{sec:NPhard} in this paper. \\
	
	{\color{black} Concurrent to this work, \cite{mokhtari2018escaping} proposed a general framework that yields convergence to an approximate $(\epsilon_g, \epsilon_H)$-second order stationary point in at most ${\cal O}(\max\{\epsilon_g^{-2}, \epsilon_H^{-3}\})$ iterations. This is achieved when the feasible set is convex and compact. In particular, the framework uses a first order method to converge to an approximate first order stationary point, and then computes a second order descent direction if it exists. However, as we show in Section~\ref{Section-Stationarity-Def}, given any positive scalars $\epsilon_g$ and $\epsilon_H$, the algorithm may converge to an approximate strict saddle point. More specifically, we provide a counterexample for which given any positive scalars $(\epsilon_g, \epsilon_H)$ applying the algorithm proposed in \cite{mokhtari2018escaping} finds an approximate strict saddle point. Since solving the quadratic sub-problem to optimality is NP-Hard, they suggest to approximately solve the sub-problems. In this paper, we show that, even for linear constraints, finding an approximate solution for these sub-problems is NP-Hard.}\\
	
	In addition to these second order methods, first order methods have also been used for finding second order stationary points of optimization problems with \textit{manifold constraints}. The recent work \cite{lee2017first} shows that manifold gradient descent converges to local minima under the strict saddle property. More recently, \cite{hong2018gradient} established a similar result for a primal-dual optimization procedure implemented to solve \textit{linear equality constrained} optimization problems. Unlike the linear equality constrained scheme, the convergence of first order algorithms to second-order stationarity is poorly understood in the presence of linear \textit{inequality} constraints.\\ 
	% More recently, [Escaping Saddle Points in Constrained Optimization] provides an algorithm that uses second order information to converge to a second-order stationary points.
	
	In this paper, we first provide an example that shows that projected gradient descent algorithm may converge to strict saddle points with positive probability even in the presence of a single linear constraint. We then discuss an NP-hardness result about solving the sub-problem of current second-order methods applied to constrained optimization problems. 	In addition, we provide a counterexample showing that current iterative second-order algorithms may get stuck near a strict saddle point. Motivated by this counterexample and inspired by the algorithms proposed in \cite{curtis2017exploiting} and \cite{lacoste2016convergence}, we propose a  second-order Frank--Wolfe algorithm that uses a dynamic choice for the direction and step-size. Moreover, our algorithm adopts a dynamic mechanism for escaping the region near strict saddle points that can be of independent interest. We show the algorithm converges to ($\epsilon_g, \epsilon_H)$-second order stationary points in ${\cal O}(\max\{\epsilon_g^{-2}, \epsilon_H^{-3}\})$ iterations. Unlike the algorithms proposed in \cite{lacoste2016convergence, mokhtari2018escaping}, our algorithm does not require any boundedness assumption on the feasible set. \\
	%
	%{\color{red} 
	% In this paper we adapt the TRACE algorithm to minimize nonlinear, possibly non-convex, smooth objective over linear constraints. We also show that our proposed method converges to a second-order stationary point of a linearly constrained non-convex optimization problem with the bounds similar to the TRACE method, up to a log factor. When the number of linear inequality constraints is finite, [] showed that the trust region sub-problem can be solved in polynomial time. Combining this result with the bounds we show for our adapted TRACE algorithm, entitled LC-TRACE, we present a polynomial time algorithm that converges to a second-order stationary point of a smooth non-convex optimization problem with fixed number of linear constraints.}
	%
	
	\section{First and Second Order Stationarity}\label{Section-Stationarity-Def}
	To understand the definitions of first and second order stationarity, we first consider the  unconstrained optimization problem
	\begin{equation}\label{eq:General-Optimization-Prob}
	%    \tag{Uncons-P1}
	\underset{\bx \in \mathbb{R}^{n}}{\min} \, \, f(\bx),
	\end{equation}
	where $f: \mathbb{R}^n \mapsto \mathbb{R}$ is a twice continuously differentiable function. We say a point $\bar\bx$ is a first order stationary point of \eqref{eq:General-Optimization-Prob} if $\nabla f(\bar\bx) = \bzero$. Similarly, a point $\bar\bx$ is said to be a second-order stationary point if $\nabla f(\bar\bx) = \bzero$ and $\nabla^2 f(\bar\bx) \succeq 0$. Moreover, we say $\bx$ is a strict saddle point if it is a first-order stationary point but not a second-order stationary  point. If all second order stationary points of the objective function are local optima, we say the function satisfies the \textit{strict saddle} property. This property is satisfied in many practical objective functions; see \cite{ge2015escaping,  kawaguchi2016deep, sun2016geometric, sun2017complete-2, sun2017complete}. In addition, if every local optima of the objective function is globally optimal, then finding the global optimum of the objective boils down to finding a second order stationary point; see \cite{ge2016matrix, lu2017depth, nouiehed2018learning, sun2015nonconvex} for examples of such functions.
	
	\vspace{0.2cm}

	We now extend these definitions to the constrained optimization problem
	\begin{equation}\label{eq:General-Cons-Optimization-Prob}
	%  \tag{Cons-P2}
	\underset{\bx \in \mathcal{F}}{\min} \, \, f(\bx),
	\end{equation}
	where $\mathcal{F}\subseteq \mathbb{R}^n$ is a closed convex set. We say a point $\bar{\bx} \in {\cal F}$ is a first order stationary point of \eqref{eq:General-Cons-Optimization-Prob} if $\langle \nabla f(\bar{\bx}), \bx - \bar{\bx} \rangle \geq 0$ for all $\bx \in \mathcal{F}$, or equivalently if 
	\begin{equation}\label{eq:FOS}
	\begin{split}
	0 =  \min_{\bs}\quad &\langle \nabla f(\bar\bx), \bs \rangle  \\
	\st \quad & \bar\bx + \bs \in {\cal F}, \, \|\bs\|\leq 1.
	\end{split}
	\end{equation}
	Similarly, as defined in \cite{bertsekas1999nonlinear}, we say a point $\bar{\bx} \in {\cal F}$ is a second order stationary point of \eqref{eq:General-Cons-Optimization-Prob} if $\bar{\bx}$ is a first order stationary point and
	\begin{equation}\label{eq:SoSConstrained}
	0\leq \bd^T \nabla^2f(\bar{\bx}) \bd,\quad \forall \, \bd \;\st\; \langle\bd,\nabla f(\bar{\bx}) \rangle =0\textrm{ and } \bar{\bx} + \bd \in \mathcal{F}. 
	\end{equation}
	
	Moreover, we say that (\ref{eq:General-Cons-Optimization-Prob}) satisfies the strict saddle property if every saddle point of the objective is not a second order stationary point. Notice that these definitions simplify to the corresponding unconstrained  definitions when $\mathcal{F} = \mathbb{R}^n$.
	
	\vspace{0.2cm}
	
	Motivated by (\ref{eq:FOS}) and (\ref{eq:SoSConstrained}), given a feasible point $\bx \in {\cal F}$, we define the following first and second order stationarity measures
	%\begin{equation}\label{eq:FOS}
	% \begin{split}
	% 0 =  \min_{\bd}\quad &\langle \nabla f(\bar\bx), \bd \rangle  \\
	% \st \quad & \bar\bx + \bd \in {\cal F}, \, \|\bd\|\leq 1.
	% \end{split}
	% \end{equation}
	\begin{equation}\label{eq:X_k}
	\cX(\bx) \triangleq  -\min_{\bs}\;\;\langle \nabla f(\bx), \bs \rangle \quad \st \quad  \bx + \bs \in {\cal F}, \, \|\bs\|\leq 1.
	\end{equation}
	% \begin{equation}\label{eq:X_k}
	%     \cX(\bx) \triangleq \Bigl|\displaystyle{\operatornamewithlimits{\mbox{min}_{        \begin{array}{l}
	%     \bx + \bs \in {\cal F}\\ \|\bs\|_2 \leq 1 \end{array}}} \langle \nabla f(\bx),\bs \rangle\Bigr|;
	% \end{equation}
	and 
	\begin{equation}\label{eq:psi_k}
	{\cal \psi}(\bx, \alpha) \triangleq -\min_{\bd}\;\; \bd^T \nabla^2f(\bx)\bd \quad  \st \quad  \bx + \bd \in {\cal F},  \; \|\bd\|\leq 1, \; \langle \nabla f(\bx), \bd \rangle \leq \alpha.
	\end{equation}
	These optimality measures have been previously used in the literature  \cite{conn2000trust, cartis2012adaptive, cartis2015evaluation, nouiehed2019trust}. Notice that since $x$ is feasible, $\cX(\bx) \geq 0$ and $\cpsi (\bx, \alpha) \geq 0$. These optimality measures can be linked to the standard definitions in~\cite{bertsekas1999nonlinear} by the following Lemma.

	\begin{lemma} \label{lem:StationarityContinuous}
		If $\bar\bx \in {\cal F}$ then
		\begin{itemize}
			\item $\cX (\bar\bx) = 0$ if and only if $\bar\bx$ is a first order stationary point.
			\item $\cX(\bar{\bx}) = \cpsi(\bar{\bx}, 0 ) = 0$ if and only if $\bar\bx$ is a second order stationary point.
		\end{itemize}
	\end{lemma}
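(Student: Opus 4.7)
My plan is to organize the proof around three steps: (i) a preliminary non-negativity observation that lets us drop the absolute values, (ii) continuity via the Berge maximum theorem, and (iii) the two stationarity equivalences via a simple rescaling trick. For step (i), note that $\bs = \bzero$ is feasible in the $\cX$-program and $\bd = \bzero$ in the $\cpsi$-program (since $\bar{\bx} \in \mathcal{F}$), so both inner minima are non-positive and we may write $\cX(\bx) = -\min_{\bs \in \Omega_1(\bx)}\langle \nabla f(\bx), \bs\rangle$ and $\cpsi(\bx) = -\min_{\bd \in \Omega_2(\bx)} \bd^T \nabla^2 f(\bx)\bd$, where $\Omega_1(\bx) = \{\bs : \bx + \bs \in \mathcal{F}, \|\bs\| \leq 1\}$ and $\Omega_2(\bx) = \{\bd \in \Omega_1(\bx) : \langle \nabla f(\bx),\bd\rangle \leq 0\}$; in particular, both measures are non-negative. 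For step (ii), I would appeal to the Berge maximum theorem: the objectives are jointly continuous in $(\bx,\bs)$ and $(\bx,\bd)$ because $f \in C^2$, and each correspondence is compact-valued, non-empty (contains $\bzero$), and can be shown to be both upper and lower hemicontinuous using the closedness and convexity of $\mathcal{F}$ together with continuity of $\nabla f$; the optimal values are then continuous in $\bx$.

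For step (iii), both equivalences reduce to the same rescaling idea. First-order equivalence: if $\bar{\bx}$ is first-order stationary then $\langle \nabla f(\bar{\bx}), \bs\rangle \geq 0$ for every $\bs \in \Omega_1(\bar{\bx})$ and $\bs = \bzero$ attains $0$, so $\cX(\bar{\bx}) = 0$; conversely, given $\cX(\bar{\bx}) = 0$ and any $\bx \in \mathcal{F}$, the vector $\bs = (\bx - \bar{\bx})/\max\{1,\|\bx - \bar{\bx}\|\}$ lies in $\Omega_1(\bar{\bx})$ (since $\bar{\bx} + \bs$ is a convex combination of $\bar{\bx}$ and $\bx$), hence $\langle \nabla f(\bar{\bx}), \bs\rangle \geq 0$ un-scales to the first-order condition. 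Second-order equivalence: the ``$\Rightarrow$'' direction uses that first-order stationarity forces every $\bd \in \Omega_2(\bar{\bx})$ to satisfy both $\langle \nabla f(\bar{\bx}), \bd\rangle \leq 0$ (by construction of $\Omega_2$) and $\langle \nabla f(\bar{\bx}), \bd\rangle \geq 0$ (by first-order stationarity), hence $= 0$, and then \eqref{eq:SoSConstrained} yields $\bd^T \nabla^2 f(\bar{\bx})\bd \geq 0$; the ``$\Leftarrow$'' direction rescales any $\bd$ with $\bar{\bx} + \bd \in \mathcal{F}$ and $\langle \nabla f(\bar{\bx}),\bd\rangle = 0$ down to $\bd/\max\{1,\|\bd\|\} \in \Omega_2(\bar{\bx})$ and uses the $2$-homogeneity of the quadratic form.

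The main obstacle I foresee is in step (ii), specifically lower hemicontinuity of $\Omega_2$: the halfspace $\{\langle \nabla f(\bx),\cdot\rangle \leq 0\}$ tilts with $\bx$, and its normal degenerates at first-order stationary points of $f$. A natural recovery construction is $\tilde{\bd}_n = (\bd + (\bar{\bx} - \bx_n))/(1 + \|\bar{\bx} - \bx_n\|)$, which keeps $\bx_n + \tilde{\bd}_n \in \mathcal{F}$ as a convex combination of $\bx_n$ and $\bar{\bx} + \bd$ and satisfies $\|\tilde{\bd}_n\| \leq 1$; any residual violation of the halfspace constraint, of order $\|\nabla f(\bx_n) - \nabla f(\bar{\bx})\|$, is then absorbed by further shrinking toward $\bzero \in \Omega_2(\bx_n)$. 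This step is the only non-routine part of the proof and would consume most of a careful write-up.
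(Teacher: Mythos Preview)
The paper states this lemma without proof, so there is nothing to compare against directly; I assess your plan on its merits. Steps (i) and (iii), and the continuity of $\cX$ via Berge's theorem, are sound: your convex-combination construction $\tilde{\bs}_n=(\bs+\bar{\bx}-\bx_n)/(1+\|\bar{\bx}-\bx_n\|)$ does yield lower hemicontinuity of $\Omega_1$. The genuine gap is exactly where you flag it, and your proposed repair fails. The point $\bzero$ lies on the \emph{boundary} of the halfspace $\{\bd:\langle\nabla f(\bx_n),\bd\rangle\le 0\}$, not in its interior, so scaling toward $\bzero$ preserves the sign of the inner product: $\langle\nabla f(\bx_n),t\,\tilde{\bd}_n\rangle=t\,\langle\nabla f(\bx_n),\tilde{\bd}_n\rangle$ is still positive whenever the unscaled quantity is. Shrinking never ``absorbs'' the violation.

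This is not a fixable technicality: the continuity claim for $\cpsi$ is false as stated. Take $\mathcal{F}=\{(x,y):x\ge 0\}$ and $f(x,y)=xy+\tfrac{1}{3}y^3$. At $\bar{\bx}=(0,0)$ one has $\nabla f(\bar{\bx})=\bzero$, so the halfspace constraint in $\Omega_2(\bar{\bx})$ is vacuous and minimizing $\bd^T\nabla^2 f(\bar{\bx})\bd=2d_1d_2$ over $\{d_1\ge 0,\ \|\bd\|\le 1\}$ gives $\cpsi(\bar{\bx})=1$. Along $\bx_n=(0,1/n)\in\mathcal{F}$, however, $\nabla f(\bx_n)=(1/n,1/n^2)$ forces $nd_1+d_2\le 0$ in addition to $d_1\ge 0$, and on that sliver the quadratic form $2d_1d_2+(2/n)d_2^2=(2/n)\,d_2(nd_1+d_2)$ is nonnegative (both factors are $\le 0$), hence $\cpsi(\bx_n)=0$ for every $n$. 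Thus $\cpsi(\bx_n)\to 0\ne 1=\cpsi(\bar{\bx})$, and $\cpsi$ is not even lower semicontinuous at $\bar{\bx}$. Any attempted proof of continuity of $\cpsi$ in this generality must fail; the lemma, and its downstream use in the paper to pass to limit points, would need either a weaker conclusion or an additional structural hypothesis on $\mathcal{F}$ and $f$.
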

	
	%\begin{proof}
	% First note that $\cX(\bx) = 0$ is obviously equivalent to the condition in (\ref{eq:FOS}). If $\cX(\bx) = 0$, or equivalently (\ref{eq:FOS}) holds,  then 
	% \[\{\bd \, | \,\bx + \bd \in {\cal F}, \, \nabla f(\bx)^T\bd \leq 0, \, \|\bd\|_2\leq 1\} =  \{\bd \, | \,\bx + \bd \in {\cal F}, \, \nabla f(\bx)^T\bd =0, \, \|\bd\|_2 \leq 1\}.\]
	% It follows that 
	% \[\cX(\bx) = \cpsi(\bx) = 0 \Leftrightarrow (\ref{eq:SoSConstrained}) \mbox{ holds}.\]
	%\end{proof}
	
	\vspace{0.2cm}

	The above lemma motivates the following definition.
	
	\begin{definition}\label{def:EpsFOSSOS}
		Approximate first and second order stationary points:
		\begin{itemize}
			\item Given a positive scalar $\epsilon_g$, a point $\bar\bx$ is said to be an $\epsilon_g$-first order stationary point of the optimization problem~\eqref{eq:General-Cons-Optimization-Prob} if $\bar\bx \in {\cal F}$ and $\cX(\bar\bx) \leq \epsilon_g$.
			\item Given positive scalars $\epsilon_g$ and $\epsilon_H$, a point $\bar\bx$ is said to be an $(\epsilon_g, \epsilon_H)$-second order stationary point of the optimization problem~\eqref{eq:General-Cons-Optimization-Prob} if $\bar\bx \in {\cal F}$, $\cX(\bar\bx) \leq \epsilon_g$ and $\cpsi(\bar{\bx},0) \leq \epsilon_H$.
		\end{itemize} 
	\end{definition}
	
	\vspace{0.2cm}
	
	For unconstrained optimization problems, $\epsilon_g$-first order stationarity condition defined in~\ref{def:EpsFOSSOS} boils down to the classical condition $\|\nabla f(\bar\bx) \| \leq \epsilon_g$. Similarly,  $(\epsilon_g,\epsilon_H)$-second order stationarity condition defined in~\ref{def:EpsFOSSOS} is equivalent to $\|\nabla f(\bar\bx)\| \leq \epsilon_g$ and $\lambda_{\min}(\nabla f(\bar\bx)) \geq - \epsilon_H$. These are the standard definitions of approximate first and second order stationarity for unconstrained optimization \cite{curtis2017exploiting, curtis2017trust, curtis2017inexact, nesterov2006cubic, cartis2011adaptive-2}.

%	In the remaining part of this section, we discuss the continuity of our second-order stationarity definition~\eqref{def:EpsFOSSOS} and compare it to the definitions adapted in the literature. 
	\vspace{0.2cm}
	
	\begin{remark}
		As indicated in~\cite{nouiehed2019trust}, there are two major differences between our definition of $(\epsilon_g,\epsilon_H)$-second order stationarity and the definition used in \cite{mokhtari2018escaping}. Firstly, the definition used for approximate first and second order stationarity in~\cite{mokhtari2018escaping} does not include the normalization constraints $\|\bs\|\leq 1$ and $\|\bd\| \leq 1$ in~\eqref{eq:X_k} and~\eqref{eq:psi_k}. Secondly,~\cite{mokhtari2018escaping} uses the equality constraint  $\langle \nabla f(\bx), \bd \rangle = 0$ in~\eqref{eq:psi_k} instead of the inequality constraint $\langle \nabla f(\bx), \bd \rangle \leq 0$. The significance of including the normalization constraints and using the inequality constraint $\langle \nabla f(\bx), \bd \rangle \leq 0$ are illustrated in~\cite{nouiehed2019trust}. 
	\end{remark}
	
	%In the remaining part of this section, we discuss the continuity of our second-order stationarity definition~\eqref{def:EpsFOSSOS}. 
	
	\begin{remark}[Continuity of $\cpsi(\cdot, 0)$]
	
		While the continuity of $\cX(\cdot)$ was established in~\cite{cartis2012adaptive}, to our knowledge, the question of whether $\cpsi(\cdot, 0)$ is continuous remains unanswered. As a negative result, we provide a counterexample showing that $\cpsi(\cdot, 0)$ is non-continuous. Consider the optimization problem 
		\begin{equation}\label{CounterexampleContinuityOfPsi}
		 \min_{\bx} f(\bx) \triangleq x_1^2 + x_2^2 - 2x_3^2 + x_1 + 0.5x_2x_3 \quad \mbox{s.t.} \quad x_1 \geq 0, \quad -1 \leq x_2, x_3 \leq 0,
		 \end{equation}
		and the sequence of point 
		\[\bx_k = \bar{\bx} + \dfrac{1}{k}\bd, \]
		where  $\bar{\bx} = (0,0,0)$ and $\bd = (0,-1, 0)$. Then,
		\[\begin{array}{ll}
		\cX(\bx_k) & = -\displaystyle{
			\operatornamewithlimits{\min}_{\|\bs\|_2 \leq 1} \;\; s_1 -\dfrac{2s_2}{k}  - \dfrac{s_3}{2k} \quad \mbox{s.t.} \quad s_1 \geq 0, \;\;   -1 + \dfrac{1}{k}\leq s_2 \leq \dfrac{1}{k}, \;\; -1 \leq s_3 \leq 0}\\
		\\
		& = \dfrac{2}{k^2}.
		\end{array}\] 
		Hence, $\cX(\bx_k) \rightarrow 0$ as $k \rightarrow \infty$. Also,
		\[\cpsi(\bx_k, 0) = \min_{\|\bd\|_2 \leq 1}  2d_1^2 + 2d_2^2 - 4d_3^2 + d_2d_3 \quad \mbox{s.t.} \quad \bd \in {\cal D}_k,\] 
		where 
		\[{\cal D}_k \triangleq \left\{ \bd \;\;\Big|\;\; d_1 \geq 0, \quad   -1 + \dfrac{1}{k}\leq d_2 \leq \dfrac{1}{k}, \quad -1 \leq d_3 \leq 0 , \quad d_1 - \dfrac{2d_2}{k} -\dfrac{d_3}{2k} \leq 0 \right\}.\]
		
		As $k \rightarrow \infty$, one can easily check that the feasible set ${\cal D}_k \rightarrow \bzero$. Hence, $\cpsi(\bx^k, 0) \rightarrow 0$. Moreover, 
		
		\[\begin{array}{ll}
		\cpsi(\bar{\bx}, 0) & = \displaystyle{
			\operatornamewithlimits{\min_{\|\bd\|_2 \leq 1}}}  2d_2^2 - 4d_3^2 + d_2d_3 \quad \mbox{s.t.} \quad -1 \leq d_2, d_3 \leq 0\\
		\\
		& = 4.
		\end{array}\] 
		
		We have constructed a counterexample of a sequence of points ${\bx^k} \rightarrow \bzero$, with $\cpsi(\bx^k) \rightarrow 0 \neq \cpsi(\bzero)$. Therefore, $\cpsi(\cdot)$ is a non-continuous function. According to our definition~\ref{def:EpsFOSSOS}, given any prescribed positive constants $\epsilon_g$ and $\epsilon_H$ and for sufficiently large $k$, $\bx_k$  defined in our counterexample is an $(\epsilon_g, \epsilon_H)$-second order stationary point. However, $\bx_k$ can be arbitrarily close to $\bar{\bx}$ which is a first order stationary point but not a second-order stationary point. The former undesirable scenario can occur in practice when using the algorithm proposed in~\cite{mokhtari2018escaping}. More specifically, applying the algorithm  proposed in~\cite{mokhtari2018escaping} to minimize the function~\eqref{CounterexampleContinuityOfPsi} with initial point $\bx_0 = (0,-1/2, 0)$ finds a point near the origin which is a first but not second order stationary point. To avoid the former undesirable scenario, we have to show that if the sequence of iterates computed by the algorithm $\bx_k \rightarrow \bar{\bx}$ and  $\cpsi(\bx_k) \rightarrow 0$, then $\cpsi(\bar{\bx}) = 0$. We propose an algorithm with this former property in Section~\ref{Section_FW}.
		\end{remark}

	In the unconstrained scenario, it is well-known that gradient descent with random initialization converges to second order stationary points with probability one \cite{lee2016gradient}. Moreover, there exist various efficient algorithms for finding an $(\epsilon_g,\epsilon_H)$-second order stationary point of the objective function \cite{nesterov2006cubic, curtis2017trust, curtis2017exploiting, curtis2017inexact,  cartis2011adaptive, cartis2011adaptive-2}. In what follows, we study whether these results can be directly extended to the constrained scenario by answering the following questions:
	
	\vspace{0.5cm}
	
	\begin{center}
		\noindent\fbox{
			\parbox{0.9\textwidth}{
				\begin{itemize}
					\item[\textbf{Question 1:}] Does projected gradient descent with random initialization converge to second order stationary points with probability one?
					\vspace{0.3cm}
					\item[\textbf{Question 2:}] Does there exist an efficient algorithm for finding an $(\epsilon_g,\epsilon_H)$-second order stationary point of the general constrained optimization problem \eqref{eq:General-Cons-Optimization-Prob}?
				\end{itemize}
			}
		}
	\end{center}

	\vspace{0.7cm}

	\section{Projected Gradient Descent with Random Initialization May Converge to Strict Saddle Points with Positive Probability}
	\label{sec:CounterExample}
	It is known that gradient descent with fixed step size can converge to an $\epsilon$-first order stationary point in $\mathcal{O}(\epsilon^{-2})$ iterations for unconstrained smooth optimization problems \cite{nesterov2013introductory}. Moreover, with random initialization, this method escapes strict saddle points for general smooth unconstrained optimization problems with probability one  \cite{lee2016gradient}. %This result is established using stable manifold theorem by showing that the set of initial points that converge to a strict saddle point when gradient descent with fixed step-size is used is a measure zero set. 
	In the general constrained optimization problem \eqref{eq:General-Cons-Optimization-Prob}, projected gradient descent algorithm is a natural replacement for gradient descent. The iterates of the projected gradient descent algorithm are obtained by
	\[
	x_{k+1}   \leftarrow \mathcal{P}_{\mathcal{F}} \left( x_k - \alpha_k \nabla f(x_k)\right),
	\]
	where $\alpha_k$ is the step-size, $k$ is the iteration number, and $\mathcal{P}_{\mathcal{F}}$ is the projection operator onto the feasible set $\mathcal{F}$. 
	A natural question about projected gradient descent is whether it has the same behavior as the 
	
	gradient descent algorithm. More specifically, can projected gradient descent escape strict saddle points?
	To answer this question, we provide an example where projected gradient descent fails to converge to second order stationary points even in the presence of a single linear constraint.
	
	\vspace{0.2cm}
	
	Consider the following optimization problem:
	\begin{equation}\label{P1}
	\underset{x, \, y \in \mathbb{R}}{\min} \, \, f(x,y) \triangleq -x y e^{-x^2 - y^2} + \dfrac{1}{2} y^2  \quad \mbox{s.t.} \quad x+y \leq 0.
	\end{equation}
	The landscape of the function $f$ and its corresponding negative gradient mapping are plotted in Figures \ref{fig1:a} and \ref{fig1:b}. Notice that the function $f(\cdot)$ has the following first and second order derivatives: 
	\[
	\nabla f(x,y) = \left(
	\begin{array}{c}
	-(1-2x^2)y e^{-x^2 - y^2} \\
	-(1-2y^2)x e^{-x^2 - y^2} + y
	\end{array}
	\right)
	\]
	\[\nabla^2 f(x,y) = \begin{pmatrix}
	2xy (3-2x^2)e^{-x^2 - y^2} & -(1-2x^2)(1-2y^2)e^{-x^2-y^2}\\
	-(1-2x^2)(1-2y^2)e^{-x^2-y^2} & 2xy(3-2y^2)e^{-x^2 - y^2} +1
	\end{pmatrix}.
	\]

	\begin{figure}[h]
		\centering 
		\subfloat[Landscape of $f(\cdot)$]{\label{fig1:a}\includegraphics[scale = 0.45]{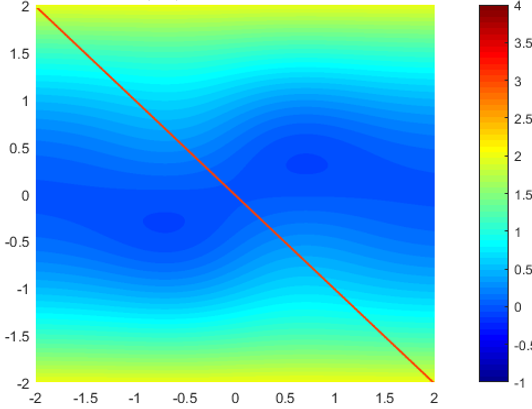}}
		\hspace{0.7cm} 
		\subfloat[Negative Gradient Flow for $f(\cdot)$]{\label{fig1:b}\includegraphics[scale = 0.45]{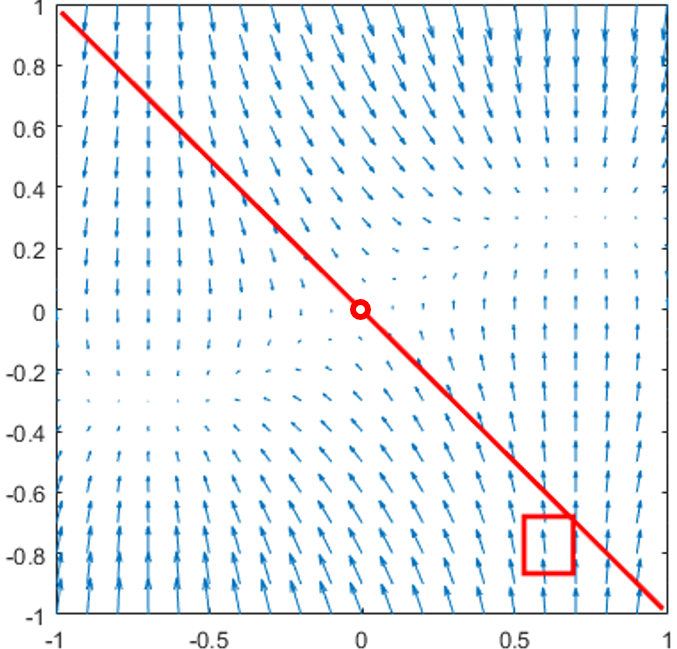}}
		\caption[Landscape and negative gradient mapping of $f$]{The landscape and negative gradient mapping of $f$. The red box in \ref{fig1:b} shows a non-zero measure set that converges to the origin when projected gradient descent is used.}
	\end{figure}
	
	First of all, it is not hard to check that  $\nabla f(0,0) = \bzero$, $\nabla^2 f(0,0) = \begin{pmatrix} 0 & -1\\  -1 & 1 \end{pmatrix}$, and for the feasible direction $\bv=(-1,-1)$, we have $\bv^T\nabla^2f(0,0) \bv = -1$. Hence, the point $(0,0)$ is a saddle point that is not second order stationary. Therefore, the origin is a strict saddle point.  However, as one can see in Figure \ref{fig1:b}, projected gradient descent algorithm may converge to the origin if initialized around the lower right corner of the figure. This observation is true for various step-size selection rules. 
	To formalize this observation, in what follows, we show that projected gradient descent converges to the strict saddle point $(0,0)$ if initialized inside the red box in Figure~\ref{fig1:b}.
	
	\vspace{0.1cm}
	
	First, we show that if the sequence generated by projected gradient descent method intersects a subset of the boundary of the constraint in (\ref{P1}), then the algorithm will eventually converge to the origin.
	
	\begin{lemma}\label{lem:conditions-converge-to-origin}
		If for any $k \in \mathbb{N}_{+}$, the iterate $(x_k, y_k)$ of the sequence generated by projected gradient descent method with constant step-size $\alpha_k =\bar{\alpha}$ with $0 <\bar{\alpha} < 2/3$ applied to~\eqref{P1} satisfies
		\begin{equation}\label{eq:conditions-to-converge-to-origin}
		x_k \geq 0, \quad y_k = -x_k,
		\end{equation}
		then $\{(x_k,y_k)\}$ converges to the origin.
	\end{lemma}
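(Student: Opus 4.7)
The plan is to prove by induction on $j\ge k$ that (a) $x_j\ge 0$ and $y_j=-x_j$, and (b) $x_j$ decreases geometrically at a uniform rate. Together these immediately imply $(x_j,y_j)\to(0,0)$.

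For the inductive step, fix an iterate of the form $(x_j,-x_j)$ with $x_j\ge 0$ and substitute $y=-x_j$ into the gradient formula displayed in the excerpt. The exponential factor becomes $e^{-2x_j^2}$, and writing $a_j:=(1-2x_j^2)e^{-2x_j^2}$ gives $\nabla f(x_j,-x_j)=x_j\bigl(a_j,\,-a_j-1\bigr)^{T}$. The unprojected step $z_j:=(x_j,-x_j)-\bar\alpha\,\nabla f(x_j,-x_j)$ therefore has coordinate sum $\bar\alpha x_j\ge 0$, so the Euclidean projection of $z_j$ onto the half-space $\{x+y\le 0\}$ is obtained by subtracting $\bar\alpha x_j/2$ from each coordinate. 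A short algebraic simplification then yields the clean recursion
\[
x_{j+1}=x_j\bigl(1-\bar\alpha(a_j+\tfrac{1}{2})\bigr),\qquad y_{j+1}=-x_{j+1},
\]
so the diagonal structure $y_{j+1}=-x_{j+1}$ is automatically propagated.

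The second step is to show that the contraction factor $c_j:=1-\bar\alpha(a_j+1/2)$ lies in $(0,1)$ uniformly. Elementary analysis of $t\mapsto(1-2t)e^{-2t}$ on $t=x_j^2\ge 0$ shows $a_j\in[-e^{-2},\,1]$, hence $a_j+1/2\in[1/2-e^{-2},\,3/2]$. The hypothesis $\bar\alpha<2/3$ provides the two required bounds at once: first, $\bar\alpha(a_j+1/2)\le \bar\alpha\cdot 3/2<1$, so $c_j>0$ and the non-negativity $x_{j+1}\ge 0$ is preserved; second, since $e^{-2}<1/2$, the quantity $\delta:=\bar\alpha(1/2-e^{-2})$ is strictly positive and $c_j\le 1-\delta$. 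Combining gives $0\le x_{j+1}\le (1-\delta)\,x_j$, so $x_j\to 0$ geometrically and hence $y_j=-x_j\to 0$ as well.

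The main obstacle is really just the sign and magnitude bookkeeping for $c_j$. The step-size assumption $\bar\alpha<2/3$ is exactly what is needed to rule out an overshoot ($x_{j+1}<0$) at the upper end of the range of $a_j$, while the numerical fact $e^{-2}<1/2$ is what forces \emph{strict} contraction at the lower end. Once the recursion $x_{j+1}=c_j x_j$ and the range of $a_j$ are in hand, everything else is routine, and the proof reduces to verifying these two elementary inequalities.
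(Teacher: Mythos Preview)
Your proof is correct and follows essentially the same route as the paper's: the same one-dimensional recursion $x_{j+1}=x_j\bigl(1-\bar\alpha(a_j+\tfrac12)\bigr)$ after projection onto the half-space, and the same bounds $a_j\in[-e^{-2},1]$ coming from elementary analysis of $(1-2t)e^{-2t}$. The only difference is in the endgame: the paper concludes by monotone convergence of $\{x_j\}$ and then passes to the limit in the recursion to rule out $g(\bar x)=-\tfrac12$, whereas you observe directly that the contraction factor is uniformly at most $1-\bar\alpha(\tfrac12-e^{-2})<1$, giving geometric convergence---a marginally cleaner finish.
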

	
	\begin{proof}
		Proof of this lemma is relegated to Appendix~\ref{Apx:lem:StayonLine}.
	\end{proof}
	
	It remains to show that there exists a non-zero measure region so that if we initialize the projected gradient descent algorithm in this region, the  iterates converge to a point on the boundary satisfying the conditions in Lemma~\ref{lem:conditions-converge-to-origin}.
	
	\begin{theorem}\label{thm:non-zero-measure-set}
		For any given constant step-size $\alpha_k = \bar\alpha$  with $0 < \bar{\alpha} < \dfrac{2}{3}$, there exists $\epsilon > 0$ so that if we initialize in the set
		\[ \ball_{\epsilon} \triangleq \{(x,y) \, | \, 0.5 -\epsilon \leq  x \leq 0.5, \quad -0.5-\epsilon \leq y \leq -0.5\},\]
		then the projected gradient descent method with fixed step-size $\bar{\alpha}$ converges to the origin when applied to \eqref{P1}.
	\end{theorem}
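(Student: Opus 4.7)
The overall strategy is to reduce the theorem to Lemma~\ref{lem:conditions-converge-to-origin} by showing that a \emph{single} iteration of projected gradient descent starting from any point in $\mathcal{B}_\epsilon$ produces an iterate $(x_1,y_1)$ satisfying the hypothesis $x_1 \geq 0$, $y_1 = -x_1$. Once this is established, Lemma~\ref{lem:conditions-converge-to-origin} immediately delivers convergence to the origin. The proof therefore reduces to an analysis of a single projection near the boundary point $(0.5,-0.5)$, combined with a continuity argument to choose $\epsilon$.

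Fix $(x_0,y_0) \in \mathcal{B}_\epsilon$ and write the unprojected gradient step as $u = x_0 - \bar{\alpha}\nabla_x f(x_0,y_0)$, $v = y_0 - \bar{\alpha}\nabla_y f(x_0,y_0)$. Using the explicit formulas for $\nabla f$, a direct computation gives $\nabla_x f(0.5,-0.5) + \nabla_y f(0.5,-0.5) = -\tfrac{1}{2}$. Since $x_0+y_0 \in [-2\epsilon,0]$ and $\nabla f$ is continuous, one can choose $\epsilon$ small enough so that
\[
u+v \;=\; (x_0+y_0) - \bar{\alpha}\bigl(\nabla_x f(x_0,y_0) + \nabla_y f(x_0,y_0)\bigr) \;>\; 0
\]
for every $(x_0,y_0) \in \mathcal{B}_\epsilon$. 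The unprojected iterate therefore violates $x+y \leq 0$, and the Euclidean projection onto the half-plane $\{x+y \leq 0\}$ produces
\[
(x_1,y_1) \;=\; \Bigl( \tfrac{u-v}{2},\; \tfrac{v-u}{2} \Bigr),
\]
so that $y_1 = -x_1$ holds automatically.

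It remains to verify $x_1 \geq 0$, equivalently $u - v \geq 0$. At $(0.5,-0.5)$ one has $x_0 - y_0 = 1$ and $\nabla_x f(0.5,-0.5) - \nabla_y f(0.5,-0.5) = \tfrac{1}{2}e^{-1/2} + \tfrac{1}{2} < 1$. Hence for any step-size $\bar{\alpha} < 2/3 < 1$,
\[
u - v \;=\; (x_0 - y_0) - \bar{\alpha}\bigl(\nabla_x f(x_0,y_0) - \nabla_y f(x_0,y_0)\bigr) \;>\; 1 - \bar{\alpha} \;>\; \tfrac{1}{3}
\]
at the corner $(0.5,-0.5)$. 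By continuity of $\nabla f$, shrinking $\epsilon$ further if necessary preserves this strict inequality throughout $\mathcal{B}_\epsilon$. Thus the one-step iterate $(x_1,y_1)$ satisfies the hypotheses of Lemma~\ref{lem:conditions-converge-to-origin}, which completes the argument.

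The only obstacle is purely quantitative: one must pick a single $\epsilon$ that simultaneously enforces $u+v > 0$ (so the projection is active) and $u-v > 0$ (so the resulting iterate lies in the nonnegative half of the line $y = -x$) throughout $\mathcal{B}_\epsilon$. Both requirements reduce to strict inequalities that hold at the reference point $(0.5,-0.5)$ and extend to a neighborhood by continuity of $\nabla f$, so no delicate estimates beyond the elementary calculation above are needed.
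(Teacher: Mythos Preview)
Your proposal is correct and follows essentially the same route as the paper: show that a single projected-gradient step from any point in $\mathcal{B}_\epsilon$ lands on the boundary segment $\{y=-x,\ x\geq 0\}$, then invoke Lemma~\ref{lem:conditions-converge-to-origin}. The paper carries this out with explicit $\epsilon$-dependent inequalities, whereas you evaluate the two relevant quantities $u+v$ and $u-v$ at the corner $(0.5,-0.5)$ and extend by continuity; your verification that the \emph{projected} $x_1=(u-v)/2>0$ via $u-v>0$ is in fact slightly more direct than the paper's check of the unprojected coordinate.
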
 
	
	\begin{proof}
		Proof of this Theorem is relegated to Appendix~\ref{Apx:lem:boxtoline}.
	\end{proof}
	
	This result shows that there is a positive probability that projected gradient descent with random initialization converges to a strict saddle point of the objective.  Based on our example, we conjecture that even perturbed/stochastic projected gradient descent algorithm cannot help in escaping strict saddle points. Therefore, a natural question to ask is whether there exists a polynomial-time algorithm for finding an (approximate) second order stationary point. This question is the focus of the next section. 
	
	\vspace{0.2cm}
	
	\section{Finding or Checking (Approximate) Second Order Stationarity is NP-Hard Even in the Presence of Linear Constraints} \label{sec:NPhard}
	
	The classical result of \cite{murty1987some} shows that checking whether a point $\bd$ is a second-order stationary point of a non-convex quadratic optimization problem is in general NP-hard. This results was shown by considering the quadratic co-positivity problem
	\begin{equation}\label{co-positivity-problem}
	\min_{\bd \in \mathbb{R}^n} \quad \frac{1}{2} \bd^T \mathbf{Q}\bd \quad \quad \st \quad  \bd \geq \bzero.
	\end{equation}
	In particular, \cite[Lemma~2]{murty1987some} shows that, by adding a ball constraint $\|\bd\|_2\leq 1$, the optimal objective value of (\ref{co-positivity-problem}) is either $0$ or $\leq -2^{-n}$. Thus, checking whether $\bar\bd = 0$ is an $(\epsilon_g,\epsilon_H)$-second order stationary point requires choosing $\epsilon_H \leq -2^{-n}$. In that case \cite[Theorem~2]{murty1987some} shows that the problem is NP-Hard. Combining these two results, we conclude that checking whether $\bar\bd = 0$ is an $(\epsilon_g,\epsilon_H)$-second order stationary point is NP-hard in $(n, \log\left(1/\epsilon_H\right))$. In this section, we show that even a less ambitious goal is NP-hard. More precisely, we show that checking $(\epsilon_g,\epsilon_H)$-second order stationarity is NP-hard in $(n, 1/\epsilon_H)$. 
	
	Before proceeding to the result, let us define some notations. Let $G(V,E)$  be a graph with the set of vertices $V$ and the set of edges $E$. Also let $|V|$  be the cardinality of $V$ and  $A_G$  be the adjacency matrix of graph $G$. We define ${\cal C}_n \triangleq \{\bQ \in \mathbb{R}^{n \times n} \, | \, \bx^T\bQ\bx \geq 0, \, \, \forall \, \, \bx \geq \bzero\}$ to be the set of co-positive matrices. We denote the  all-one matrix of size $n$ by $\bones_n$. We say graph $G$ has a stable set of size $t$ if it contains a subset of $t$ vertices, from which no two vertices in the subset are connected by an edge.
	
	\begin{lemma}\label{stable-min-equiv}
		Let $G=(V,E)$ be a graph with $|V|=n$. Given a scalar $t$ with $t \leq n$, define
		\[\bQ = (\bI_n + \bA_G)(t-\dfrac{1}{2}) - \bones_n, \quad \mbox{and} \quad \delta =\dfrac{1}{2n+1}.\]
		Then the following are equivalent:\\
		\begin{enumerate}[i.]
			\item $\underset{\bd \geq \bzero, \, \|\bd\|_2 \leq 1}{\min}\,\, \bd^T\bQ\bd \leq -\frac{\delta}{\sqrt{n}}.$\\
			\item $G$ contains a stable set of size $t$
		\end{enumerate}
	\end{lemma}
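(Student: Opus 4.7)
The plan is to prove the equivalence via a two-way argument built around the classical Motzkin--Straus identity
\[
\min_{\bx \in \Delta_n} \bx^T (\bI_n + \bA_G) \bx \;=\; \frac{1}{\alpha(G)},
\]
where $\Delta_n := \{\bx \geq \bzero : \sum_i x_i = 1\}$ is the standard simplex and $\alpha(G)$ is the size of the largest stable set in $G$. This identity follows from the usual Motzkin--Straus theorem applied to the complement graph, using $\bA_{\bar G} = \bones_n - \bI_n - \bA_G$ together with $\bx^T \bones_n \bx = 1$ on the simplex. Once this identity is in place, both directions are short computations with the explicit formula for $\bQ$.

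For the direction (ii)$\Rightarrow$(i), given a stable set $S \subseteq V$ of size $t$, I would test the candidate $\bx$ defined by $x_i = 1/\sqrt{t}$ for $i \in S$ and $x_i = 0$ otherwise. It is feasible since $\bx \geq \bzero$ and $\|\bx\| = 1$. Because $S$ is edge--free one computes $\bx^T(\bI_n + \bA_G)\bx = 1$ and $\bx^T \bones_n \bx = \bigl(\sum_i x_i\bigr)^2 = t$, so
\[
\bx^T \bQ \bx \;=\; (t - \tfrac{1}{2})\cdot 1 - t \;=\; -\tfrac{1}{2},
\]
which is below $-\delta/\sqrt{n} = -1/((2n+1)\sqrt{n})$ for every $n \geq 1$.

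For (i)$\Rightarrow$(ii), I would argue the contrapositive. Assume $\alpha(G) \leq t-1$. For any feasible $\bx$ with $s := \sum_i x_i > 0$, the rescaling $\bar\bx := \bx/s$ lies in $\Delta_n$, so the identity above gives
\[
\bx^T (\bI_n + \bA_G) \bx \;=\; s^2\, \bar\bx^T (\bI_n + \bA_G)\bar\bx \;\geq\; \frac{s^2}{\alpha(G)} \;\geq\; \frac{s^2}{t-1}.
\]
Since $\bx^T \bones_n \bx = s^2$, combining these bounds yields
\[
\bx^T \bQ \bx \;\geq\; \frac{(t-1/2)\,s^2}{t-1} - s^2 \;=\; \frac{s^2}{2(t-1)} \;\geq\; 0 \;>\; -\frac{\delta}{\sqrt{n}},
\]
and the residual case $s=0$ forces $\bx = \bzero$ with $\bx^T \bQ \bx = 0$. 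Hence (i) fails, completing the contrapositive.

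The only non-routine ingredient is the Motzkin--Straus identity for $\bI_n+\bA_G$ on the simplex. If one prefers not to invoke it as a black box, the standard mass--transfer argument works: at any minimizer, for two non--adjacent vertices in the support one can shift mass to equalize the weights without increasing $\bx^T(\bI_n+\bA_G)\bx$, so a minimizer can be chosen supported on a stable set $S$ with uniform weight $1/|S|$, giving objective value $1/|S|$, which is minimized by $|S| = \alpha(G)$. Everything else reduces to quadratic algebra with the explicit form of $\bQ$, so I do not expect a serious obstacle beyond verifying this identity.
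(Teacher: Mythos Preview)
Your argument is correct and in fact sharper than the one in the paper, but it follows a genuinely different route. The paper does not use Motzkin--Straus at all; instead it outsources both directions to results of Dickinson and Gijben on copositive matrices. For (i)$\Rightarrow$(ii) the paper simply observes that a negative value of the quadratic over the nonnegative orthant means $\bQ\notin\mathcal{C}_n$, and quotes \cite[Lemma~4.1]{dickinson2014computational} to conclude that $G$ has a stable set of size $t$. For (ii)$\Rightarrow$(i) the paper again invokes \cite[Lemma~4.1]{dickinson2014computational} to get $\bQ\notin\mathcal{C}_n$, then uses the quantitative distance bound \cite[Lemma~4.5]{dickinson2014computational} to show that even the perturbed matrix $\widetilde{\bQ}=\bQ+\frac{\delta}{\sqrt{n}}\bI_n$ is not copositive, and from that deduces the $-\delta/\sqrt{n}$ bound via a short computation on the unit sphere.

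What your approach buys is self-containment and better constants: your explicit witness on a stable set gives $\bx^T\bQ\bx=-\tfrac12$, far below the required $-\delta/\sqrt{n}$, and your contrapositive via Motzkin--Straus shows the minimum is actually $\geq 0$ when $\alpha(G)\leq t-1$, not merely $>-\delta/\sqrt{n}$. The paper's approach, by contrast, explains \emph{where} the specific threshold $\delta=\frac{1}{2n+1}$ comes from (the Frobenius distance of $\bQ$ to the copositive cone), which is the quantity that drives the downstream hardness statement in Theorem~\ref{approx-SOS-NP}. One small point worth making explicit in your write-up: the inequality $\frac{s^2}{2(t-1)}\geq 0$ tacitly uses $t\geq 2$; the case $t=1$ is trivial since any nonempty graph has a stable set of size~$1$, so (ii) always holds there.
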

	
	\begin{proof}
		We first show that $i$ implies $ii$. By the definition of the set ${\cal C}_n$, the condition
		\[
		\min_{\|\bd\|_2 \leq 1, \,\; \bd \geq \bzero}\,\, \bd^T\bQ\bd \leq -\frac{\delta}{\sqrt{n}}
		\]
		implies that  $\bQ \notin {\cal C}_n$. Therefore, by \cite[Lemma~4.1]{dickinson2014computational},  $G$ contains a stable set of size $t$.\\
		
		To show the converse, we use \cite[lemma~4.5]{dickinson2014computational}. Suppose that $G$ contains a stable set of size $t$. By \cite[lemma~4.1]{dickinson2014computational}, $\bQ \notin \mathcal{C}_n$. Moreover, by \cite[lemma~4.5]{dickinson2014computational} it is $\delta$ far away from~$\mathcal{C}_n$. In other words, there exists a $\delta > 0$ such that
		\begin{equation} \label{eq:appx:Copositive}
		\|\mathbf{Y} - \bQ\|_F > \delta, \quad \forall\, \mathbf{Y}\in \mathcal{C}_n.
		\end{equation}
		
		Let $\displaystyle{\bar\bd \in \argmin_{\bd\geq \bzero,\;\;\|\bd\|_2\leq 1}}\bd^T \bQ \bd$. If $\|\bar\bd\|_2 <1$, we can clearly scale $\bar\bx$ and reduce the objective value. Hence, $\|\bar\bd\|_2 = 1$. We now define,  
		\[\widetilde{\bQ} =\bQ + \frac{\delta}{\sqrt{n}}\mathbf{I}_n.\]
		By~\eqref{eq:appx:Copositive}, we get that $\widetilde{\bQ} \notin \mathcal{C}_n$. If 
		\begin{equation}\label{eq:appx:Q_tilde_opt}
		\displaystyle{\bar\bd \in \argmin_{\bd\geq \bzero,\;\;\|\bd\|_2\leq 1}}\bd^T \widetilde{\bQ} \bd,
		\end{equation}
		then we have,
		\begin{align}
		\min_{\bd\geq \bzero,\;\; \|\bd\|_2\leq 1} \bd^T \bQ \bd
		&= \bar\bd^T \bQ \bar\bd  \nonumber\\
		&= \bar\bd^T \widetilde{\bQ} \bar\bd - \frac{\delta}{\sqrt{n}}\|\bar\bd\|^2\nonumber\\
		&= \min_{\bd\geq \bzero,\;\;\|\bd\|_2\leq 1} \bd^T \widetilde{\bQ} \bd - \frac{\delta}{\sqrt{n}} \nonumber\\
		&\leq -\frac{\delta}{\sqrt{n}} \nonumber
		\end{align}
		where the third equality holds by~\eqref{eq:appx:Q_tilde_opt}. This directly implies $ii$.\\
		
		To complete the proof, we still need to show that 
		\[\displaystyle{\bar\bd \in \argmin_{\bd\geq \bzero,\;\;\|\bd\|_2\leq 1}}\bd^T \widetilde{\bQ} \bd.\]
		Assume the contrary, then we can define $\widetilde{\bd} \neq \bar\bd$ to be the minimizer
		\[\displaystyle{\widetilde{\bd} \in \argmin_{\bd\geq \bzero,\;\;\|\bd\|\leq 1}}\bd^T \widetilde{\bQ} \bd.\]
		By the same previous argument, $\|\widetilde{\bd}\|_2 = 1$. Hence, 
		\[\begin{array}{ll}
		\widetilde{\bd}^T \widetilde{\bQ} \widetilde{\bd} & = \widetilde{\bd}^T \bQ \widetilde{\bd} + \frac{\delta}{\sqrt{n}}\|\widetilde{\bd}\|_2^2\\
		& = \widetilde{\bd}^T \bQ \widetilde{\bd} + \frac{\delta}{\sqrt{n}}\\
		& \geq \bar\bd^T \bQ \bar\bd + \frac{\delta}{\sqrt{n}}\\
		& = \bar\bd^T \bQ \bar\bd + \frac{\delta}{\sqrt{n}}\|\bar\bd\|_2^2,
		\end{array}\]
		where the first and last equalities hold since $\|\bar\bd\|_2 = \|\widetilde{\bd}\|_2=1$, and the inequality holds by definition of $\bar\bd$. Hence, $\widetilde{\bd}$ is not an optimal solution for 
		\[\displaystyle{\min_{\bd\geq \bzero,\;\;\|\bd\|\leq 1}}\bd^T \widetilde{\bQ} \bd,\]
		thus arriving to a contradiction.
	\end{proof}
	
	\vspace{0.2cm}
	
	The result of the above lemma directly implies the following theorem about the hardness of checking	second order stationarity.

	\begin{theorem}\label{approx-SOS-NP}
		For the co-positivity problem
		\begin{equation}
		\min_{\bd \in \mathbb{R}^n} \quad \frac{1}{2} \bd^T \mathbf{Q}\bd \quad \quad \st \quad  \bd \geq \bzero,
		\end{equation}
		there is no  algorithm which can check whether $\bd = 0$ is an $(\epsilon_g, \epsilon_H)$-second order stationary point in polynomial time in $(n,\frac{1}{\epsilon_H})$, unless P = NP.
		% Checking whether $\bzero$ is an $(\epsilon_g, \epsilon_H)$-second order stationary point of (\ref{co-positivity-problem}) is NP-Hard.
	\end{theorem}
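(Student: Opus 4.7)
The plan is to reduce the NP-hard maximum stable set decision problem to the task of checking $(\epsilon_g,\epsilon_H)$-second order stationarity at $\bar\bx=\bzero$, leveraging the construction and the bound already established in Lemma~\ref{stable-min-equiv}. The key observation is that at $\bzero$ the gradient of the pure quadratic $f(\bx)=\tfrac12\bx^T\bQ\bx$ vanishes, so the first-order measure $\cX(\bzero)$ equals $0$ and the constraint $\langle\nabla f(\bzero),\bd\rangle\leq 0$ in definition \eqref{eq:psi_k} is vacuous. Consequently
\[
\cpsi(\bzero) \;=\; \Bigl|\min_{\bd\geq \bzero,\;\|\bd\|\leq 1}\bd^T\bQ\bd\Bigr|,
\]
which is precisely the quantity bounded in Lemma~\ref{stable-min-equiv}.

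Given a graph $G=(V,E)$ with $|V|=n$ and a target stable-set size $t\leq n$, I would build the co-positivity instance with $\bQ=(\bI_n+\bA_G)(t-\tfrac12)-\bones_n$ and $\delta=\tfrac{1}{2n+1}$ exactly as in Lemma~\ref{stable-min-equiv}, then set any $\epsilon_g>0$ together with $\epsilon_H := \tfrac{\delta}{2\sqrt{n}} = \tfrac{1}{2\sqrt{n}(2n+1)}$. Lemma~\ref{stable-min-equiv} now splits cleanly into two cases: if $G$ contains a stable set of size $t$, then $\min_{\bd\geq \bzero,\|\bd\|\leq 1}\bd^T\bQ\bd\leq -\delta/\sqrt{n}$, so $\cpsi(\bzero)\geq\delta/\sqrt{n}>\epsilon_H$, and $\bzero$ is \emph{not} an $(\epsilon_g,\epsilon_H)$-second order stationary point; conversely, if $G$ has no stable set of size $t$, then $\bQ$ is co-positive, the minimum equals $0$ (attained at $\bd=\bzero$), so $\cpsi(\bzero)=0\leq \epsilon_H$ and $\bzero$ \emph{is} an $(\epsilon_g,\epsilon_H)$-second order stationary point.

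To close the reduction, I would observe that $1/\epsilon_H = 2\sqrt{n}(2n+1)$ is polynomial in $n$, so any checker running in time polynomial in $(n,1/\epsilon_H)$ would decide stable set in time polynomial in $n$, forcing $\mathrm{P}=\mathrm{NP}$. The main conceptual point to be careful about is that the gradient-orthogonality constraint in the definition of $\cpsi$ genuinely drops out at $\bar\bx=\bzero$, ensuring $\cpsi(\bzero)$ matches the co-positivity gap used in Lemma~\ref{stable-min-equiv}; once this identification is in hand, the hardness statement is a direct assembly of that lemma, Definition~\ref{def:EpsFOSSOS}, and the polynomial dependence of $1/\epsilon_H$ on $n$.
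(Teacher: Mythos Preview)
Your proposal is correct and follows essentially the same route as the paper, which simply records the theorem as an ``immediate consequence of Lemma~\ref{stable-min-equiv}.'' You have spelled out that immediate consequence carefully: identifying $\cpsi(\bzero)$ with the co-positivity gap, invoking the dichotomy (either $\bQ$ is co-positive and the minimum is $0$, or the minimum is at most $-\delta/\sqrt{n}$), and noting that $1/\epsilon_H$ is polynomial in $n$; the only point worth flagging is that the ``no stable set $\Rightarrow \bQ\in\mathcal{C}_n$'' step you use comes from the Dickinson--Gijben characterization cited inside the proof of Lemma~\ref{stable-min-equiv} rather than from its statement, so you should reference that explicitly.
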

	\begin{proof}
		The resultt is an immediate consequence of Lemma~\ref{stable-min-equiv}.
	\end{proof}
	
	\begin{remark}
		The result in~\cite{murty1987some} shows that checking for $(\epsilon_g,\epsilon_H)$-second order stationarity is NP-hard in $\left(n, \log(1/\epsilon_H) \right)$. In other words, there is no algorithm which can check whether $\bd$ is an $(\epsilon_g,\epsilon_H)$-second order stationarity point in polynomial time in $(n, \log(1/\varepsilon_H))$, unless P=NP. The result of Theorem~\ref{approx-SOS-NP} is stronger as it shows that checking for $(\epsilon_g,\epsilon_H)$-second order stationarity is NP-hard in $(n, 1/\epsilon_H)$.
	\end{remark}
	
	This negative result shows that we should not expect to have a polynomial-time iterative descent algorithm which can converge to second order stationary points of general convex constrained optimization problems. If such an algorithm exists, one can run that algorithm from the initial point $\bd_0 = \bzero$ and see if it can find a point with negative objective value. This observation shows that in order to have a reasonable descent algorithm (with polynomial per-iteration complexity), we must put the general convex constrained case behind; and develop algorithms for special type of constraints. This transition is the focus of the next section.
	
	\vspace{0.3cm}
	
	\section{Easy Instances of Finding Second Order Stationarity in Constrained Optimization: A Second Order Frank-Wolfe Algorithm}\label{Section_FW}
	
	As discussed in previous sections, although designing polynomial time algorithms for finding second order stationary points is easy when the optimization problem is unconstrained, the same problem becomes very hard in the general convex constrained case. In particular, even for checking second order stationarity, one needs to (approximately) solve a quadratic constrained optimization problem~\eqref{eq:psi_k}, which is NP-hard as shown in  Section~\ref{sec:NPhard}.  However, for some special constraint sets~$\mathcal{F}$, the quadratic constrained optimization problem~\eqref{eq:psi_k} can be solved efficiently. For example, when $\mathcal{F}$ is formed by a fixed number of linear constraints, \cite{hsia2013trust} presents a backtracking approach which can find the solution of~\eqref{eq:psi_k} in polynomial-time. More precisely, by doing an exhaustive backtracking search on the set of constraints, one can find the solution of the problem 
	\begin{equation}\label{eq:subProblemQL}
	\begin{split}
	\min_{\bd}\quad & \bd^T\nabla^2 f(\bx) \bd   \\
	\st \quad & \bx + \bd \in {\cal F},  \, \|\bd\|_2\leq 1\\ &\langle \nabla f(\bx), \bd \rangle \leq 0.
	\end{split}
	\end{equation}
	in polynomial-time when $\mathcal{F} = \{\bx \; \mid \; \mathbf{a}_i^T\bx \leq b_i,\; \textrm{for } i=1,\ldots,m \}$ assuming that $m$ is small and one can afford a search which is exponentially large in $m$.
	
	\vspace{0.2cm}
	
	Assuming that  \eqref{eq:subProblemQL} can be solved efficiently for a given $\mathcal{F}$, a natural question to ask is as follows:
	
	\vspace{0.3cm}
	
	\begin{center}
		\noindent\fbox{
			\parbox{0.9\textwidth}{
				Assume that the constraint set $\mathcal{F}$ is such that the quadratic optimization problem \eqref{eq:subProblemQL} can be solved efficiently. For such a constraint set $\mathcal{F}$, can we find an $(\epsilon_g,\epsilon_H)$-second order stationary point of the general smooth optimization problem~\eqref{eq:General-Cons-Optimization-Prob} efficiently?
			}
		}
	\end{center}
	\vspace{0.4cm}
	
	In this section, we answer this question affirmatively by proposing a polynomial time algorithm for finding an $(\epsilon_g,\epsilon_H)$ second order stationary point of   problem~\eqref{eq:General-Cons-Optimization-Prob} assuming that a quadratic optimization problem of the form~\eqref{eq:subProblemQL} can be solved efficiently at each iteration. The proposed algorithm can be viewed as a simple second order generalization of the Frank-Wolfe algorithm proposed in \cite{lacoste2016convergence}. In particular, in addition to the first order Frank-Wolfe  direction computed by solving~(\ref{eq:X_k}) at $\bx_k$, we also compute a second-order descent direction by solving (\ref{eq:psi_k}) at each iteration. Then we dynamically choose the direction that potentially offers a more significant predicted reduction in the objective value. This dynamic method was used in \cite{curtis2017exploiting} to design an algorithm for unconstrained optimization problems. They show convergence to $(\epsilon_g, \epsilon_H)$-second-order stationary points with complexity ${\cal O}(\max\{\epsilon_g^{-2}, \epsilon_H^{-3})\})$. Our proposed algorithm adapts this method to the constrained scenario while maintaining the same convergence guarantees and complexity bounds.
	
	\vspace{0.2in}

	\textbf{Notations.} Given a sequence of iterates $\{\bx_k\}$ computed by an algorithm for solving (\ref{eq:General-Cons-Optimization-Prob}), we define $\cX_k \triangleq \cX(\bx_k)$ where $\cX(\cdot)$ is defined in~\eqref{eq:X_k}. Moreover, we define 
	\begin{equation}\label{eq:psi_k_alpha}
		{\cal \psi}_{k, \alpha_k} \triangleq -\min_{\bd}\;\; \bd^T \nabla^2f(\bx_k)\bd \quad  \st \quad  \bx_k + \bd \in {\cal F},  \; \|\bd\|\leq 1, \; \langle \nabla f(\bx_k), \bd \rangle \leq \alpha_k,
	\end{equation}
	where $\alpha_k$ is positive scalar dynamically updated by the algorithm.
	
	\vspace{0.1in}
	
	Throughout this section, we make the following standard assumptions.
	\begin{assumption}\label{Assumption1}
		The objective function $f$ is twice continuously differentiable and bounded below by a scalar $f_{min}$ on ${\cal F}$. The constraint set~$\mathcal{F}$ is closed and convex. We assume that functions $\nabla f(\cdot)$ and $\nabla^2 f(\cdot)$ are Lipschitz continuous with Lipschitz constants $L$ and $\rho$, respectively. Furthermore, the gradient sequence $\{\nabla f(\bx_k) \}$ is bounded such that there exists a scalar constant $g_{max} > 0$ such that $\|\nabla f (\bx_k)\|_2 \leq g_{max}$ for all $k \in \mathbb{N}$. Moreover, we assume that the Hessian sequence $\{\nabla^2 f(\bx_k)\}$ is bounded in norm, that is, there exists a scalar constant $H_{max} > 0$ such that $\|\nabla^2 f(\bx_k)\|_2 \leq H_{max}$ for all~$k \in \mathbb{N}$.
	\end{assumption}

	Moreover, we make the following assumption.
	\begin{assumption}\label{Assumption2}
		There exists a constant $\beta > 0$ such that if $\bx \in {\cal F}$ satisfies $\cX(\bx)=0$, then either $\cpsi(\bx) = 0$ or $\cpsi(\bx) \geq \beta$.
	\end{assumption}
	In simpler words, we assume that every first-order stationary point is either a second-order stationary point or has a feasible negative curvature direction $\bd$ satisfying $-\bd^T\nabla^2f(\bx)\bd \geq \beta$. In the unconstrained scheme, this assumption implies that the absolute value of the smallest negative eigenvalue among all strict saddle points is bounded away from zero. If the smallest negative eigenvalue among strict saddle points can be arbitrarily close to zero, then given any prescribed positive constants $\epsilon_g$ and $\epsilon_H$, an approximate strict saddle point can be confused with approximate second-order stationary points. Even in the case of first-order stationarity, if the norm of the gradient of non-critical points can be arbitrarily close to zero, then given any prescribed positive constant $\epsilon$, iterative first-order algorithms can get stuck at these points. Next, we describe our proposed algorithm.

	\subsection{Algorithm Description}
	
	Let $\bx_k$ be the iterate in our algorithm at iteration~$k$. Given point $\bx_k$, we  define the following first order and second order descent directions
	\begin{equation}\label{eq:s_k}
	\begin{split}
	\widehat{\bs}_k  \triangleq  \arg\min_{\bs}\quad &\langle \nabla f(\bx_k), \bs \rangle  \\
	\st \quad & \bx_k + \bs \in {\cal F}, \, \|\bs\|_2\leq 1,
	\end{split}
	\end{equation}
	and
	\begin{equation}\label{eq:d_k}
	\begin{split}
	\widehat{\bd}_k \triangleq \arg\min_{\bd}\quad &\bd^T \nabla^2f(\bx_k)\bd  \\
	\st \quad & \bx_k + \bd \in {\cal F},  \, \|\bd\|_2\leq 1\\ &\langle \nabla f(\bx_k), \bd \rangle \leq \alpha_k,
	\end{split}
	\end{equation}
%	and
%	\begin{equation}\label{eq:d_k_0}
%	\begin{split}
%	\widehat{\bd}_{k,0} \triangleq \arg\min_{\bd}\quad &\bd^T \nabla^2f(\bx_k)\bd  \\
%	\st \quad & \bx_k + \bd \in {\cal F},  \, \|\bd\|_2\leq 1\\ &\langle \nabla f(\bx_k), \bd \rangle \leq 0,
%	\end{split}
%	\end{equation}
	where $\alpha_k$ is a positive scalar. Notice that in the unconstrained scenario, $\widehat{\bs}_k = -\nabla f(\bx_k)$ and $\widehat{\bd}_k$ is a feasible negative curvature direction. When $\nabla f(\bx_k) =\bzero$, 
	%$\bd_k$ and 
	$\widehat{\bd_{k}}$ become the eigenvector corresponding to the leftmost eigenvalue of the Hessian matrix $\nabla^2 f(\bx_k)$, which lead to the simple directions proposed in \cite{curtis2017exploiting} for the unconstrained scenario. 
	
	\vspace{0.2cm}
	
	The algorithm described below follows a dynamic strategy of choosing between $\widehat{\bs}_k$ and $\widehat{\bd}_k$ for all $k \in \mathbb{N}$. The choice is done based on which direction predicts a larger reduction in the objective. If $\widehat{\bs}_k$ is always chosen, then the algorithm resembles Frank-Wolfe algorithm \cite{lacoste2016convergence}. Hence, our algorithm can be seen as a second order extension of Frank-Wolfe algorithm. Moreover, the algorithm involves dynamic updates for the scalar $\alpha_k$. More specifically, the algorithm updates $\alpha_k$ until the following conditions hold
	\begin{equation}\label{eq:conditions_rho_k}
		\begin{array}{c}
		f(\bx_k) - f\left(\bx_k + \dfrac{2\cpsi_{k, \alpha_k}}{\widetilde{\rho}}\widehat{\bd}_k\right) \geq \dfrac{\cpsi_{k,\alpha_k}^3}{3\widetilde{\rho}^2},\\\\ 
		\widetilde{\rho} \geq 2\cpsi_{k,\alpha},\\\\
		\langle\nabla f(\bx_k) , \widehat{\bd}_k\rangle  \leq \dfrac{\cpsi_{k, \alpha_k}^2}{6\widetilde{\rho}} 
		\end{array}
	\end{equation}
	where $\widetilde{\rho} \triangleq \max\{\rho, 2H_{max}\}$.	The above conditions guarantee feasibility of next iterate and sufficient decrease in the objective value. 	
	
%	\begin{algorithm}[H]
%		\caption{Second Order Frank-Wolfe with Fixed Step-size}\label{alg-FW-cap}
%		\textbf{Require:} The constants $\tilde{L}\triangleq \max\{L, g_{max}\}$, $\tilde{\rho}\triangleq \max\{\rho, H_{max}\}$.
%		
%		\hrulefill
%		
%		\begin{algorithmic}[1] \label{alg-FW}
%			\Procedure{}{}
%			\State Choose $\bx_0 \in \mathcal{F}$.
%			\State Compute $\cX_0$ and $\cpsi_0$ by solving  (\ref{eq:X_k}) and (\ref{eq:psi_k}), respectively.
%			
%			\For{$k=0, 1, 2, \ldots$} 
%			Compute $\widehat{\bs}_k, \, \cX_k$ and $\widehat{\bd}_k, \, \cpsi_k$ using~\eqref{eq:s_k}~and~\eqref{eq:d_k}, respectively.
%			%\IfThenElse{$\cX_k =0$}{set $\widehat{\bs}_k =0$,}{compute $\widehat{\bs}_k$ and $\mathcal{X}_k$ by solving (\ref{eq:s_k}).} \label{alg-comp-s}
%			%\IfThenElse{$\cpsi_k =0$}{set $\widehat{\bd}_k =0$,}{compute $\widehat{\bd}_k$ and $\mathcal{\psi}_k$ by solving (\ref{eq:d_k}).} \label{alg-comp-d}
%			\If{$\cpsi_k = \cX_k =0$} 
%			\State \textbf{terminate} and \Return $\bx_k$. \label{alg-done}
%			\EndIf
%			
%			\vspace{0.2cm}
%			
%			\If{$\dfrac{\cX_k^2}{2\tilde{L}} \geq \dfrac{2\cpsi_k^3}{3\tilde{\rho}^2}$} \label{alg-dir-choice}
%			
%			\State set $\bx_{k+1} \gets \bx_k + \dfrac{\cX_k}{\tilde{L}}\widehat{\bs}_k$ \label{alg-choose-s}
%			
%			\Else
%			
%			\State set $\bx_{k+1} \gets \bx_k + \dfrac{2\cpsi_k}{\tilde{\rho}}\widehat{\bd}_k$ \label{alg-choose-d}
%			\EndIf
%			\EndFor
%			\EndProcedure
%			
%			
%		\end{algorithmic}
%	\end{algorithm}

	\begin{algorithm}[h]
	\caption{Second Order Frank-Wolfe with Fixed Step-size}\label{alg-FW-cap}
	\textbf{Require:} $\bx_0 \in {\cal F}$, $\widetilde{L} \triangleq \max\{L, g_{max}\}$, $\widetilde{\rho} \triangleq \max\{\rho, 2H_{max}\}$, $\gamma > 1$ and $\alpha_0 \in (0,1]$.
	
	\hrulefill
	
	\begin{algorithmic}[1] \label{alg-FW}
		\Procedure{}{}
		\State Compute $\cX_0$ and $\cpsi_{0,\alpha_0}$ by solving  \eqref{eq:X_k} and \eqref{eq:psi_k_alpha}, respectively.
		\For{$k=0, 1, 2, \ldots$} 
			\State Compute $\widehat{\bs}_k, \, \cX_k$ and $\widehat{\bd}_k, \, \cpsi_{k,\alpha_k}$ using~\eqref{eq:s_k}~and~\eqref{eq:d_k}, respectively.
			\If{$\cpsi_{k,\alpha_k} = \cX_k =0$} 
				\State \textbf{terminate} and \Return $\bx_k$. \label{alg-done}
			\EndIf
		\vspace{0.2cm}
		\Loop \label{alg-loop}
		\If{$\dfrac{\cX_k^2}{2\widetilde{L}} \geq \dfrac{\cpsi_{k,\alpha_k}^3}{3\widetilde{\rho}^2}$} \label{alg:Dynamic_Choice}
			\State set $\bx_{k+1} \gets \bx_k + \dfrac{\cX_k}{\widetilde{L}}\widehat{\bs}_k$ \label{alg-choose-s}
			\State \textbf{exit loop} \label{alg:exit-loop-s}
		\Else
			\If{Conditions~\eqref{eq:conditions_rho_k} hold for $\alpha_k$}
			\vspace{0.2cm}
				\State set $\bx_{k+1} \gets \bx_k + \dfrac{2\cpsi_{k,\alpha}}{\widetilde{\rho}}\widehat{\bd}_k$ \label{alg-choose-d}
				\State \textbf{exit loop} \label{alg:exit-loop-d}
				\Else
				\vspace{0.1cm}
				\State set $\alpha_k \gets \dfrac{1}{\gamma}\alpha_k$ \label{alg-update-alpha}
				\vspace{0.2cm}
				\State Compute $\widehat{\bd}_k, \, \cpsi_{k,\alpha_k}$ using~\eqref{eq:d_k}
				\vspace{0.2cm}
				\State \textbf{go to} Step~\ref{alg:Dynamic_Choice}
			\EndIf
		\EndIf		
		\EndLoop		
	\EndFor
	\EndProcedure

	\end{algorithmic}
\end{algorithm}

	\subsection{Convergence Results}
%	We first note that regardless of the direction we choose, the step size is either $\dfrac{\cX_k}{\tilde{L}}$ or $\dfrac{2\cpsi_k}{\tilde{\rho}}$, which by Cauchy Schwartz and the definitions of $\tilde{L}$ and $\tilde{\rho}$ are both less than or equal to $1$. Thus, the iterates generated by the algorithm are always feasible. Also notice that, unlike the algorithms proposed in \cite{lacoste2016convergence, mokhtari2018escaping}, our algorithm does not require any boundedness assumption on the feasible set ${\cal F}$. Another advantage of the proposed algorithm is that it does not require the knowledge of  the desired accuracy level $(\epsilon_g,\epsilon_H)$. This allows us to  modify our termination rule when running the algorithm if needed. Next, we show that Algorithm~\ref{alg-FW-cap} asymptotically converges to a second order stationary point. 
	
	We first note that regardless of the direction we choose, the step size is either $\dfrac{\cX_k}{\widetilde{L}}$ or $\dfrac{2\cpsi_{k,\alpha_k}}{\widetilde{\rho}}$, which by Cauchy Schwartz and conditions~\eqref{eq:conditions_rho_k} are both less than or equal to $1$. Thus, the iterates generated by the algorithm are always feasible. Also notice that, unlike the algorithms proposed in \cite{lacoste2016convergence, mokhtari2018escaping}, our algorithm does not require any boundedness assumption on the feasible set ${\cal F}$. 
	%Another advantage of the proposed algorithm is that it does not require the knowledge of  the desired accuracy level $(\epsilon_g,\epsilon_H)$. This allows us to  modify our termination rule when running the algorithm if needed. 
	Before proceeding to the proof of convergence of our algorithm, we first show the following essential Lemma.
	
%	\begin{lemma}\label{lm:LowerBound_UpperBound}
%	Consider a point $\bbx \in {\cal F}$ satisfying $\cX(\bbx)=0$ and $\cpsi(\bbx) \neq 0$. Then, given any positive constant $t$ and under assumptions~\ref{Assumption1} and \ref{Assumption2}, there exists $\bar{\delta}_t >0$ such that for all $\btx \in \ball(\bbx, \delta)$, there exists $\btx^* \in \ball(\btx, 1)\cap {\cal F}$ such that
%		\begin{flalign}
%			-(\btx^* - \btx)^T \nabla^2 f(\btx) (\btx^* - \btx) \geq \dfrac{\beta}{(1+\delta)^2} - \rho\delta - 5H_{max}\delta^2 - 6H_{max}\delta \label{UpperBound}\\
%			\nonumber\\
%			 \left\langle \nabla f(\btx) , \btx^* - \btx \right\rangle \leq (L+3g_{max})\delta + L \delta^2 \label{LowerBound}\\
%			 \nonumber\\
%			\left[(\btx^* - \btx)^T \nabla^2 f(\btx) (\btx^* - \btx)\right]^2 \geq t \left\langle \nabla f(\btx) , \btx^* - \btx \right\rangle, \label{LowerBound_UpperBound}
%		\end{flalign}
%	for all $\delta \leq \bar{\delta}_t$.
%	\end{lemma}

	\begin{lemma}\label{lm:LowerBound_UpperBound}
		Consider a point $\bbx \in {\cal F}$ satisfying $\cX(\bbx)=0$ and $\cpsi(\bbx) \neq 0$. Then under Assumptions~\ref{Assumption1} and \ref{Assumption2}, for all $\btx \in \ball(\bbx, \delta)$, there exists $\btx^* \in \ball(\btx, 1)\cap {\cal F}$ such that
		\begin{equation}
		-(\btx^* - \btx)^T \nabla^2 f(\btx) (\btx^* - \btx) \geq L(\delta)\label{UpperBound}
		\end{equation}
		and
		\begin{equation}
		\left\langle \nabla f(\btx) , \btx^* - \btx \right\rangle \leq U(\delta), \label{LowerBound}
		\end{equation}
		where
		\[L(\delta) \triangleq \dfrac{\beta}{(1+\delta)^2} - \rho\delta - 5H_{max}\delta^2 - 2H_{max}\delta\]
		and
		\[U(\delta) \triangleq (L+3g_{max})\delta + L \delta^2 .\]
	\end{lemma}
	\begin{proof}
		The proof is relegated to Appendix~\ref{lm:LowerBound_UpperBound-proof}.
	\end{proof}

%	Finally, we show~\eqref{LowerBound_UpperBound}. According to~\eqref{LowerBound}, we have
%	\[\left\langle \nabla f(\btx) , \btx^* - \btx \right\rangle \leq U(\delta),\]
%	where $U(\delta) \triangleq  (L+3g_{max})\delta + L\delta^2$ is a monotone increasing function for $\delta \in [0,\infty)$ with $U(0)=0$. Moreover,  according to~\eqref{UpperBound}, we have
%	\[\left[(\btx^* - \btx)^T \nabla^2 f(\btx) (\btx^* - \btx)\right]^2 \geq L(\delta),\]
%	where $L(\delta) \triangleq  \dfrac{\beta}{(1+\delta)^2} -\rho\delta  - 5H_{max}\delta^2 - 2H_{max}\delta$ is a monotone decreasing function in for $\delta \geq 0$. It is obvious that there exist $\bar{\delta}_t$ such that $t U(\bar{\delta}) = L(\bar{\delta})$. By monotonicity of $U(\delta)$ and $L(\delta)$, we conclude that for all $0 \leq \delta \leq \bar{\delta}_t$, condition~\eqref{LowerBound_UpperBound} holds. This completes the proof of the Lemma. 
%	
	
	The Lemma above indicates that if $\bx_k \in \ball(\bbx, \delta)$ where $\bbx$ is a strict saddle point, then there exists an $\bar{\alpha} = L(\delta)$ such that $\cpsi(\bx_k , \alpha)$ is bounded below by a positive constant for all $0 <\alpha \leq \bar{\alpha}$.  Hence, if $\bx_k \rightarrow \bar{\bx}$ with $\cX(\bx_k) \rightarrow 0$ and $\cpsi(\bx_k, \alpha) \rightarrow 0$, then $\cX(\bar{\bx})=0$ and $\cpsi(\bar{\bx}, \alpha)=0$ for all $\alpha \leq \bar{\alpha}$. This is formally stated in the Corollary below.	
	
	\begin{corollary}\label{corollary:convergence_asymptotic}
		Consider a sequence of iterates $\{\bx_k\}$ generated by Algorithm~\ref{alg-FW} with $\bx_k \rightarrow \bar{\bx}$. There exists $\bar{\alpha} > 0$ such that if $\cX(\bx_k) \rightarrow 0$ and $\cpsi(\bx_k, \alpha) \rightarrow 0$. Then 
		\[\cX(\bar{\bx}) = 0 \quad \mbox{and} \quad \cpsi(\bar{\bx}, \alpha) = 0,\]
		for all $0 < \alpha < \bar{\alpha}$. 
	\end{corollary}
	
	\begin{proof}
		By the continuity of $\cX(\cdot)$ \cite{conn2000trust}, we get that $\cX(\bar{\bx})=0$. According to~\eqref{LowerBound}, we have \[\left\langle \nabla f(\btx) , \btx^* - \btx \right\rangle \leq U(\delta),\]
		where $U(\delta) \triangleq  (L+3g_{max})\delta + L\delta^2$ is a monotone increasing function for $\delta \in [0,\infty)$ with $U(0)=0$. Moreover,  according to~\eqref{UpperBound}, we have
		\[\left[(\btx^* - \btx)^T \nabla^2 f(\btx) (\btx^* - \btx)\right]^2 \geq L(\delta),\]
		where $L(\delta) \triangleq  \dfrac{\beta}{(1+\delta)^2} -\rho\delta  - 5H_{max}\delta^2 - 2H_{max}\delta$ is a monotone decreasing function for $\delta \geq 0$. Consider $\bar{\delta}$ such that $L(\bar{\delta})=0$. By monotonicity of $U(\delta)$ and $L(\delta)$, given $0 \leq \delta < \bar{\delta}$, there exists $\bx_k^* \in \ball(\bx_k) \cap {\cal F}$ with
		\[\left\langle \nabla f(\bx_k), \bx_k^* - \bx_k \right\rangle \leq \alpha \triangleq  U(\delta)\]
		and
		\[-(\bx_k^* - \bx_k)^T \nabla^2 f(\bx_k) (\bx_k^* - \bx_k) \geq L(\delta).\]
		Hence by definition of $\cpsi_{k, \alpha}$ in~\ref{eq:psi_k_alpha}, $\cpsi_{k, \alpha} \geq L(\delta) > 0$ does not converge to zero which contradicts the assumption that $\cpsi_{k, \alpha} \rightarrow 0$. This completes the proof.
		
	\end{proof}

\begin{theorem}\label{thm:Frank-Wolfe-convergence}
	Under Assumption \ref{Assumption1},
	\[	\lim_{k \rightarrow \infty} \cX_k = \lim_{k \rightarrow \infty} \cpsi_{k, \alpha_k} =0.\]
	In other words, any limit point of the iterates is a second order stationary point.
\end{theorem}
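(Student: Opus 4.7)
The plan is to establish a per-iteration sufficient decrease
\[
f(\bx_k) - f(\bx_{k+1}) \;\geq\; \max\Bigl\{\tfrac{\cX_k^2}{2\tilde L},\; \tfrac{2\cpsi_k^3}{3\tilde\rho^2}\Bigr\},
\]
telescope over $k$ using the lower bound $f \geq f_{\min}$ from Assumption~\ref{Assumption1}, and then invoke Lemma~\ref{lem:StationarityContinuous} to convert the resulting summability into the stated limits. Because the algorithm's dynamic rule on line~\ref{alg-dir-choice} picks whichever branch has the larger predicted model reduction, the max bound follows as soon as each branch is shown to achieve its own predicted reduction individually.

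For the first-order branch (line~\ref{alg-choose-s}) I would apply the descent lemma
\[
f(\bx_{k+1}) \leq f(\bx_k) + \langle \nabla f(\bx_k), \bx_{k+1}-\bx_k\rangle + \tfrac{L}{2}\|\bx_{k+1}-\bx_k\|^2
\]
to the update $\bx_{k+1} = \bx_k + (\cX_k/\tilde L)\widehat{\bs}_k$. Since $\bzero$ is feasible in~(\ref{eq:s_k}) with objective value $0$, the minimum there is non-positive, so $\langle \nabla f(\bx_k), \widehat{\bs}_k\rangle = -\cX_k$; combining this with $\|\widehat{\bs}_k\|\leq 1$ and $L\leq \tilde L$ collapses the bound to a decrease of at least $\cX_k^2/(2\tilde L)$. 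For the second-order branch (line~\ref{alg-choose-d}), I would invoke the Lipschitz-Hessian cubic Taylor inequality
\[
f(\bx_{k+1}) \leq f(\bx_k) + \langle \nabla f(\bx_k), \bx_{k+1}-\bx_k\rangle + \tfrac{1}{2}(\bx_{k+1}-\bx_k)^T \nabla^2 f(\bx_k)(\bx_{k+1}-\bx_k) + \tfrac{\rho}{6}\|\bx_{k+1}-\bx_k\|^3.
\]
The explicit constraint $\langle\nabla f(\bx_k),\widehat{\bd}_k\rangle \leq 0$ in~(\ref{eq:d_k}) kills the linear term, and feasibility of $\bzero$ in~(\ref{eq:d_k}) gives $\widehat{\bd}_k^T\nabla^2 f(\bx_k)\widehat{\bd}_k = -\cpsi_k$; substituting the step $2\cpsi_k/\tilde\rho$, applying $\|\widehat{\bd}_k\|\leq 1$ and $\rho\leq\tilde\rho$, the quadratic gain $2\cpsi_k^3/\tilde\rho^2$ dominates the cubic error $4\cpsi_k^3/(3\tilde\rho^2)$ and leaves a net decrease of $2\cpsi_k^3/(3\tilde\rho^2)$. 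The one calibration point worth flagging is that the step length $2\cpsi_k/\tilde\rho$ (rather than the naive $\cpsi_k/\tilde\rho$) is precisely what is needed for the quadratic term to beat the cubic remainder with positive slack; this is the only place where I anticipate any care being required.

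Summing the per-iteration bound from $k=0$ to $\infty$ and using $f(\bx_k)\geq f_{\min}$ yields
\[
\sum_{k=0}^{\infty} \max\Bigl\{\tfrac{\cX_k^2}{2\tilde L},\,\tfrac{2\cpsi_k^3}{3\tilde\rho^2}\Bigr\} \leq f(\bx_0) - f_{\min} < \infty,
\]
which forces both summands to tend to zero and therefore $\cX_k\to 0$ and $\cpsi_k\to 0$. Continuity of $\cX(\cdot)$ and $\cpsi(\cdot)$ from Lemma~\ref{lem:StationarityContinuous} then ensures that for any convergent subsequence $\bx_{k_j}\to\bar\bx$ we have $\cX(\bar\bx)=\cpsi(\bar\bx)=0$, and Lemma~\ref{lem:StationarityContinuous} identifies $\bar\bx$ as a second order stationary point. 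No genuine obstacle is expected beyond bookkeeping of constants in Case~2.
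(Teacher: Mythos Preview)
Your proposal is correct and follows essentially the same route as the paper: establish the per-branch decreases via the first- and second-order descent lemmas, combine them through the dynamic choice in line~\ref{alg-dir-choice} to obtain the $\max$ bound, telescope against $f_{\min}$, and conclude via the continuity in Lemma~\ref{lem:StationarityContinuous}. The constants and step-length calibration you flag (in particular the $2\cpsi_k/\tilde\rho$ step balancing the quadratic gain against the cubic remainder) match the paper's computation exactly.
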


\begin{proof}
	We start by showing that the algorithm exits the loop (Step~\ref{alg-loop}) at every iteration $k$. The claim is obvious when Step~\ref{alg:Dynamic_Choice} tests \textit{True}. We show that the claim holds when Step~\ref{alg:Dynamic_Choice} tests \textit{False}. Assume $\left\langle \nabla f(\bx_k), \widehat{\bd}_k \right\rangle \leq \dfrac{\cpsi_{k, \alpha_k}^2}{6\widetilde{\rho}}$, then using second-order descent lemma \cite{bertsekas1999nonlinear} we obtain
	\begin{flalign}
	f(\bx_{k+1}) &\leq f(\bx_k) + \langle \nabla f(\bx_k), \bx_{k+1}- \bx_k \rangle \nonumber \\
	&+ \dfrac{1}{2}(\bx_{k+1}- \bx)^T \nabla^2 f(\bx_k)(\bx_{k+1}- \bx_k) +  \dfrac{\rho}{6}\|\bx_{k+1}- \bx_k\|_2^3 \nonumber\\
	&\leq f(\bx_k) + \langle \nabla f(\bx_k), \bx_{k+1}- \bx_k \rangle \nonumber \\
	&+ \dfrac{1}{2}(\bx_{k+1}- \bx_k)^T\nabla^2 f(\bx_k) (\bx_{k+1}- \bx_k) +  \dfrac{\widetilde{\rho}}{6}\|\bx_{k+1}- \bx_k\|_2^3 \nonumber \\
	&= f(\bx_k) + \dfrac{2\cpsi_{k,\alpha_k}}{\widetilde{\rho}}\langle \nabla f(\bx_k), \widehat{\bd}_k \rangle + \dfrac{2\cpsi_{k,\alpha_k}^2}{\widetilde{\rho}^2}(\widehat{\bd}_k)^T \nabla^2 f(\bx_k) (\widehat{\bd}_k) + \dfrac{4\cpsi_{k,\alpha_k}^3}{3\widetilde{\rho}^2}\|\widehat{\bd}_k\|_2^3 \nonumber\\
	&\leq f(\bx_k) + \dfrac{\cpsi_{k,\alpha_k}^3}{3\widetilde{\rho}^2} - \dfrac{2\cpsi_{k,\alpha_k}^3}{\widetilde{\rho}^2} + \dfrac{4\cpsi_{k,\alpha_k}^3}{3\widetilde{\rho}^2} \nonumber\\
	&= f(\bx_k) - \dfrac{\cpsi_{k,\alpha_k}^3}{3\widetilde{\rho}^2}\label{reduction-d},
	\end{flalign}
	where the fourth inequality holds since 
	\[\left\langle \nabla f(\bx_k), \widehat{\bd}_k \right\rangle \leq \dfrac{\cpsi_{k, \alpha_k}^2}{6\widetilde{\rho}}, \quad \|\widehat{\bd}_k\| \leq 1, \quad \mbox{and} \quad \cpsi_{k, \alpha_k} = - \widehat{\bd}_k^T\nabla^2 f(\bx_k) \widehat{\bd}_k.\]
	
	Hence, if 
	\begin{equation}\label{rho_alpha_cond}
		\left\langle \nabla f(\bx_k), \widehat{\bd}_k \right\rangle \leq \dfrac{\cpsi_{k, \alpha_k}^2}{6\widetilde{\rho}},
	\end{equation}
	conditions~\eqref{eq:conditions_rho_k} are satisfied. This directly implies that Step~\ref{alg:exit-loop-d} is reached and the algorithm exists the loop. We next show that~\eqref{rho_alpha_cond} will be satisfied at every iteration $k$. Assume the contrary, then 
	\[\left\langle \nabla f(\bx_k), \widehat{\bd}_k \right\rangle > \dfrac{\cpsi_{k, \alpha_k}^2}{6\widetilde{\rho}}.\]
	According to the definition of $\cpsi_{k, \alpha_k}$ in~\eqref{eq:psi_k_alpha}, 
	\begin{equation}\label{eq:Condition-For-Loop-Exit}
	\left( \dfrac{3 \widetilde{\rho}^2 \cX_k^2}{\widetilde{L}} \right)^{2/3} \leq \cpsi_{k, \alpha_k}^2 <  6\widetilde{\rho} \left\langle \nabla f(\bx_k), \widehat{\bd}_k \right\rangle \leq 6\widetilde{\rho}\alpha_k,
	\end{equation}
	where the first inequality holds since Step~\ref{alg:Dynamic_Choice} tested \textit{False}. By the update rule in Steps~\ref{alg-update-alpha}, $\widetilde{\rho}\alpha_k \rightarrow 0 $. Therefore, $\cX_k = 0$ and $\lim_{\alpha_k \rightarrow 0} \cpsi_{k, \alpha_k} =  0$. However, if $\bx_k$ is not a second-order stationary point, $\cpsi_{k, \alpha_k} \geq \cpsi_{k,0} \geq \beta$ which contradicts the convergence of $\widetilde{\rho}\alpha_k$ to zero.\\
	
	We have showed that~\eqref{rho_alpha_cond} will be satisfied for every $k$ unless $\bx_k$ is a second-order stationary point. It follows that Step~\ref{alg:exit-loop-d} or Step~\ref{alg:exit-loop-s} is reached and the algorithm exists the loop. We next show that $\lim_{k \rightarrow \infty} \cpsi_{k, \alpha_k} = 0$. To proceed with the proof, we show the following reduction bound in the objective value		\[f(\bx_{k+1})\leq  f(\bx_k) - \max\left\{\dfrac{\cX_k^2}{2\widetilde{L}}, \, \dfrac{\cpsi_{k, \alpha_k}^3}{3\widetilde{\rho}^2}\right\}.\]
	
	If Step~\ref{alg-choose-s} is reached, then using descent lemma \cite[Appendix~A.24]{bertsekas1999nonlinear}, we obtain
	\begin{flalign}
	f(\bx_{k+1}) &\leq f(\bx_k) + \langle \nabla f(\bx_k), \bx_{k+1}- \bx_k \rangle + \dfrac{L}{2}\|\bx_{k+1}- \bx_k\|_2^2 \nonumber\\
	&\leq f(\bx_k) + \langle \nabla f(\bx_k), \bx_{k+1}- \bx_k \rangle + \dfrac{\tilde{L}}{2}\|\bx_{k+1}- \bx_k\|_2^2 \nonumber\\
	&= f(\bx_k) + \dfrac{\cX_k}{\tilde{L}}\langle \nabla f(\bx_k), \widehat{\bs}_k \rangle + \dfrac{\cX_k^{\,2}}{2\tilde{L}}\| \widehat{\bs}_k\|_2^2\nonumber\\
	&\leq f(\bx_k) - \dfrac{\cX_k^2}{\tilde{L}} + \dfrac{\cX_k^2}{2\tilde{L}}\nonumber\\
	&= f(\bx_k) - \dfrac{\cX_k^{\,2}}{2\tilde{L}}. \label{reduction-s}
	\end{flalign}
	
	Otherwise, Step~\ref{alg-choose-d} is reached and conditions~\eqref{eq:conditions_rho_k} are satisfied. Hence, we get the following reduction bound in the objective value at every iteration
	\begin{equation}\label{reduction}
	f(\bx_{k+1})\leq  f(\bx_k) - \max\left\{\dfrac{\cX_k^2}{2\widetilde{L}}, \, \dfrac{\cpsi_{k, \alpha_k}^3}{3\widetilde{\rho}^2}\right\}.
	\end{equation}
	
	By summing over the iterations, we get
	\begin{equation}\label{eq:SuffDecrease}
	f(\bx_{\ell+1}) - f(\bx_0) = \sum_{k=0}^{\ell}\big(f(\bx_{k+1}) - f(\bx_k)\big) \leq -\sum_{k=0}^{\ell} \,\max\left\{\dfrac{\cX_k^2}{2\widetilde{L}}, \, \dfrac{\cpsi_{k, \alpha_k}^3}{3\widetilde{\rho}^2}\right\}.
	\end{equation}
	
	Hence, since $f$ is bounded below by $f_{min}$, we have
	\begin{flalign*}
	0 \leq \sum_{k=0}^{\ell}  \max\left\{\dfrac{\cX_k^2}{2\widetilde{L}}, \, \dfrac{\cpsi_{k, \alpha_k}^3}{3\widetilde{\rho}^2}\right\} \leq f(\bx_0 ) -f_{min}.
	\end{flalign*}
	
	Thus,
	\[\lim_{k \rightarrow \infty} \cX_k = \lim_{k \rightarrow \infty} \cpsi_{k, \alpha_k} =0.\]
	By Lemma~\ref{corollary:convergence_asymptotic}, we get that every limit point of the iterates is a second order stationary point. This completes the proof.

	\end{proof}

	The next result computes the worst-case complexity required to reach an $\epsilon_g$-first order stationary point and to reach an $(\epsilon_g, \epsilon_H)$-second order stationary point.

\begin{theorem}\label{thm:Frank-Wolfe-complexity}
		Let $\epsilon_g$, $\epsilon_H > 0$.  The number of iterations required for  Algorithm~\ref{alg-FW-cap} to find an $\epsilon_g$-first order stationary point is at most ${\cal O}(\epsilon_g^{-2})$. Moreover, the number of iterations required to find an $(\epsilon_g, \epsilon_H)$-second order stationary point is at most 
		${\cal O}\left(\max\left\{\epsilon_g^{-2}, \epsilon_H^{-3}\right\}\right)$.
\end{theorem}

%\begin{theorem}\label{thm:Frank-Wolfe-complexity}
%	Let $\epsilon_g$, $\epsilon_H > 0$.  The number of iterations required for  Algorithm~\ref{alg-FW-cap} to find an $\epsilon_g$-first order stationary point is at most ${\cal O}( \epsilon_g^{-2})$. 	Moreover, the number of iterations required to find an $(\epsilon_g, \epsilon_H)$-second order stationary point is at most ${\cal O}( \max\{\epsilon_g^{-2},\log(\epsilon_g^{-1}) \epsilon_H^{-3}\}$.
%\end{theorem}

	\begin{proof}
		Let $r$ be the number of times Step~\ref{alg-update-alpha} is reached. We first show $r = {\cal O}\left( \log\left(max\{ \epsilon_g, \epsilon_H \}\right) \right)$.
		
		Assume that \[\left\langle \nabla f(\bx_k), \widehat{\bd}_k \right\rangle \leq \dfrac{\cpsi_{k, \alpha_k}}{6 \widetilde{\rho}}. \]
		Then, by second-order descent lemma conditions~\eqref{eq:conditions_rho_k} are satisfied which directly implies that Step~\ref{alg:exit-loop-d} is reached and the algorithm exists the loop. Otherwise, if 
		\[\left\langle \nabla f(\bx_k), \widehat{\bd}_k \right\rangle > \dfrac{\cpsi_{k, \alpha_k}}{6 \widetilde{\rho}}. \]
		then by the definition of $\cpsi_{k, \alpha_k}$ we have
		\[\left( \dfrac{3 \widetilde{\rho}^2 \cX_k^2}{\widetilde{L}} \right)^{2/3} \leq \cpsi_{k, \alpha_k}^2 <  6\widetilde{\rho} \left\langle \nabla f(\bx_k), \widehat{\bd}_k \right\rangle \leq 6\widetilde{\rho}\alpha_k,\]
		where the first inequality holds since Step~\ref{alg:Dynamic_Choice} tested \textit{False}.\\
		
		It follows that 
		\[\begin{array}{ll}
		\alpha_0 \gamma^{-r}  &> \max\left\{\dfrac{\cpsi_{k, \alpha_k}^2}{6 \widetilde{\rho}},  \left(\dfrac{9\widetilde{\rho} \cX_k^4}{6^3\widetilde{L}^2}\right)^{1/3}\right\}\\
		&\geq \max\left\{ \dfrac{\epsilon_H^2}{6\widetilde{\rho}}, \left(\dfrac{9\widetilde{\rho} \epsilon_g^4}{6^3\widetilde{L}^2}\right)^{1/3}\right\},
		\end{array}\]
		which implies that
		\begin{equation}\label{eq: bounds_r}
		r \leq \dfrac{1}{\log(\gamma)\alpha_0}\log \left(\max\left\{ \dfrac{\epsilon_H^2}{6 \widetilde{\rho}}, \left(\dfrac{9\widetilde{\rho} \epsilon_g^4}{6^3\widetilde{L}^2}\right)^{1/3}   \right\}\right)={\cal O}\left( \log\left(max\{ \epsilon_g, \epsilon_H \}\right)\right). 
		\end{equation}
		
		Now define the index sets
		\[{\cal G}(\epsilon_g) \triangleq \{ k \,| \, \cX_k >\epsilon_g\} \quad \mbox{and} \quad {\cal H}(\epsilon_H) \triangleq \{ k \,| \, \cpsi_k >\epsilon_H\}.\]
		Next we show bounds on $\big| \, {\cal G}(\epsilon_g)\, \big|$ and $\big| \, {\cal G}(\epsilon_g) \, \cup \, {\cal H}(\epsilon_H) \, \big|$. First notice that the sufficient decrease bound~\eqref{eq:SuffDecrease} implies that
		\begin{equation} \label{eq:ObjDecrease}
		\sum_{k=0}^{\ell} \,\max\left\{\dfrac{\cX_k^2}{2L_k}, \, \dfrac{2\cpsi_{k, \alpha_k}^3}{3\rho_k}\right\} \; \leq \; f(\bx_0) - f_{min},
		\end{equation}
		for every iteration $\ell$. According to the bound \eqref{eq:ObjDecrease}, it is easy to show that the cardinality of the above two sets is bounded by
		\begin{align} \label{eq: bounds_G_H}
		\big| \, {\cal G}(\epsilon_g)\, \big| & \leq \dfrac{2\tilde{L}(f(\bx_0) - f_{min})}{\epsilon_g^2} \nonumber\\
		\big| \, {\cal G}(\epsilon_g) \, \cup \, {\cal H}(\epsilon_H) \, \big| & \leq  \dfrac{f(\bx_0) - f_{min}}{\min\left\{\dfrac{\epsilon_g^2}{2\tilde{L}}, \, \dfrac{2\epsilon_H^3}{3\tilde{\rho}^{\,2}}\right\}}.
		\end{align}
		
		Combining~\eqref{eq: bounds_G_H} and \eqref{eq: bounds_r} we get the desired result.

	\end{proof}

	\newpage
	{
		\bibliography{references}
		\bibliographystyle{abbrv}
	}

	\newpage
	\appendix
	
	\section{Proof of Lemma~\ref{lem:conditions-converge-to-origin}}\label{Apx:lem:StayonLine}
	For a given $k \in \mathbb{N}_{+}$, let $(x_k, y_k) = (x_k, -x_k)$ be the current iterate with $x_k \geq 0$. We first show that iterate $k+1$ generated by projected gradient descent satisfies
	\begin{equation}\label{eq:condition-on-the-line}
	x_{k+1} + y_{k+1}=0.
	\end{equation}
	% Then we show that there exist $\delta >0$ such that
	% \begin{equation}\label{eq:condition-sufficient-decrease}
	%     x_{k+1} \geq 0, \quad x_k - x_{k+1} \geq \delta.
	% \end{equation}
	Then we show that
	\begin{equation}\label{eq:condition-sufficient-decrease}
	x_{k+1} \geq 0, \quad x_k - x_{k+1} \geq 0.
	\end{equation}
	Combining (\ref{eq:condition-on-the-line}) and (\ref{eq:condition-sufficient-decrease}), we will complete our proof.\\
	
	First note that if $x_k =0$, then the result trivially holds. Assume that $x_k >0$, we define
	\begin{flalign*}
	&\bar{x}_{k+1} \triangleq  x_k - \bar{\alpha}\nabla_x f(x_k , -x_k) = x_k - \bar{\alpha}(1-2x_k^2)x_k e^{-2x_k^2} \quad \mbox {and} \\
	&\bar{y}_{k+1} \triangleq  y_k - \bar{\alpha}\nabla_y f(x_k , -x_k) = -x_k + \bar{\alpha}\big[(1-2x_k^2)x_k e^{-2x_k^2} + x_k\big].
	\end{flalign*}
	% \[\bar{x}_{k+1} \triangleq  x_k - \bar{\alpha}\nabla_x f(x_k , -x_k) = x_k - \bar{\alpha}(1-2x_k^2)x_k e^{-2x_k^2} \quad \mbox {and} \quad  \bar{y}_{k+1} \triangleq  y_k - \bar{\alpha}\nabla_y f(x_k , -x_k) = -x_k + \bar{\alpha}\big[(1-2x_k^2)x_k e^{-2x_k^2} + x_k\big]. \]
	
	Since $\bar{x}_{k+1} + \bar{y}_{k+1} = \bar{\alpha} x_k > 0$, the point $(\bar{x}_{k+1}, \bar{y}_{k+1})$ is not feasible. By projecting $(\bar{x}_{k+1}, \bar{y}_{k+1})$ to the feasible set $\{(x,y) \, | \, y+x \leq 0\}$, we obtain
	\begin{equation}
	\begin{array}{l}
	x_{k+1} = x_k - \bar{\alpha}(1-2x_k^2)x_k e^{-2x_k^2} -\dfrac{\bar{\alpha}}{2}x_k \quad \mbox{and}\\  y_{k+1} = -x_k + \bar{\alpha}\big[(1-2x_k^2)x_k e^{-2x_k^2} + \dfrac{1}{2}x_k\big].
	\end{array}
	\end{equation}
	Obviously (\ref{eq:condition-on-the-line}) holds. We now show that
	\begin{flalign*}
	&x_{k+1} = x_k \big[1 - \dfrac{\bar{\alpha}}{2} -\bar{\alpha}(1-2x_k^2)e^{-2x_k^2}  \big] \geq 0 \quad \mbox{and}\\
	&x_k - x_{k+1} = x_k\big[\bar{\alpha}(1-2x_k^2)e^{-2x_k^2} + \dfrac{\bar{\alpha}}{2}\big] \geq 0.
	\end{flalign*}
	
	Let $g(x) \triangleq (1-2x^2)e^{-2x^2}$. This function has two global minima $x=\pm 1$, and one global maximum $x=0$. Hence,
	\begin{equation}\label{eq:bound-on-g}
	-e^{-2} \leq g(x) \leq 1, \quad \forall \, \,x.
	\end{equation}
	
	Using (\ref{eq:bound-on-g}), we get 
	\[x_{k+1} = x_k \big[ 1 - \dfrac{\bar{\alpha}}{2} -\bar{\alpha}g(x_k)\big] \geq x_k\big[1 - \dfrac{\bar{\alpha}}{2} -\bar{\alpha}\big] \geq 0,\]
	where the second inequality holds since $\bar{\alpha} < 2/3$ and $x_k \geq 0$. Also,
	\[x_k - x_{k+1} = x_k\bar{\alpha}  \big[g(x_k)  + \dfrac{1}{2}\big] \geq x_k\bar{\alpha}  \big[\dfrac{1}{2}-e^{-2}\big] \geq 0.\]
	
	% We now show that there exist $\delta >0$ such that
	% \begin{equation*}\label{eq:condition-sufficient-decrease}
	%     x_{k+1} \geq 0, \quad x_k - x_{k+1} \geq \delta.
	% \end{equation*}
	% Substituting the new iterates $(x_{k+1}, y_{k+1})$ in (\ref{eq:condition-sufficient-decrease}), we obtain
	% % \begin{flalign*}
	% % x_k - x_{k+1} &= \bar{\alpha}(1-2x_k^2)x_ke^{-x_k^2} + \dfrac{\bar{\alpha}}{2}x_k \geq \delta\\
	% % & \Rightarrow (1-2x_k^2)x_ke^{-x_k^2} \geq -\dfrac{1}{2} + \dfrac{\delta}{\bar{\alpha}x_k},
	% % \end{flalign*}
	% \begin{flalign*}
	% x_k - x_{k+1} &= \bar{\alpha}(1-2x_k^2)x_ke^{-2x_k^2} + \dfrac{\bar{\alpha}}{2}x_k \\
	% & \Rightarrow (1-2x_k^2)x_ke^{-x_k^2} > -\dfrac{1}{2},
	% \end{flalign*}
	% and
	% \[x_{k+1} \geq 0 \Rightarrow  (1-2x_k^2)x_ke^{-x_k^2} \leq -\dfrac{1}{2} + \dfrac{1}{\bar{\alpha}}.\]
	
	% Let $g(x) \triangleq (1-2x^2)e^{-2x^2}$, we compute the critical points $x^{*}$ that arise from minimizing or maximizing this function
	% \[\nabla g(x^{*}) = -8x^{*}(1-(x^{*})^2)e^{-2(x^{*})^2}=0 \Rightarrow x^{*} = 0, \pm 1.\]
	% By substituting these critical points, we get
	% %conclude that
	% for any $\delta \leq \bar{\alpha} x_k (\dfrac{1}{2} - e^{-2})$,
	% \[-\dfrac{1}{2} < -e^{-2} \leq g(x_k) \leq 1 \leq \dfrac{1}{\bar{\alpha}} - \dfrac{1}{2},  \]
	% where the last inequality uses the upper bound we have on $\bar{\alpha}$.\\
	
	Combining (\ref{eq:condition-on-the-line}) and (\ref{eq:condition-sufficient-decrease}), we conclude that for any $\bar{k} \geq k$, $(x_{\bar{k}}, y_{\bar{k}})$ belongs to the compact set $\{(x,y) \,| \, 0 \leq x \leq x_k,\,\, y=-x\} $ which guarantees convergence. Thus
	\[\bar{x} \triangleq \lim_{k \rightarrow \infty} x_{k+1}  = \lim_{k \rightarrow \infty}\Big[ x_k - \bar{\alpha}(1-2x_k^2)x_k e^{-2x_k^2} -\dfrac{\bar{\alpha}}{2}x_k \Big]=\bar{x}\Big[1 - \dfrac{\bar{\alpha}}{2} - \bar{\alpha}g(\bar{x})\Big],   \]
	which implies
	% \[\bar{x} =0, \quad \mbox{or} \quad g(\bar{x}) = \dfrac{1}{\bar{\alpha}}-\dfrac{1}{2} > 1. \]
	% Here the last inequality holds since $\bar{\alpha} <\dfrac{2}{3}$. Since $\underset{x}{\max}\,\, g(x) < 1$, we get that $\bar{x} =0$ which completes the proof.\hfill$\blacksquare$
	\[\bar{x} =0, \quad \mbox{or} \quad g(\bar{x}) = -\dfrac{1}{2}. \]
	Since $\underset{x}{\max}\,\, g(x) > - e^{-2}$, we get that $\bar{x} =0$ which completes the proof.\hfill$\blacksquare$

	\section{Proof of Theorem~\ref{thm:non-zero-measure-set}} \label{Apx:lem:boxtoline}
	Consider the initial point $(x_0,y_0)$. If we can show that $y_1 = -x_1$ and $x_1 \geq 0$, then using Lemma~\ref{lem:conditions-converge-to-origin}, we conclude that the sequence of iterates $\{(x_k, y_k)\}$ eventually converges to the origin. Thus it suffices to show that there exist an $\epsilon > 0$ such that if
	\begin{equation}\label{eq:intial-condition-to converge-to-origin}
	0.5 -\epsilon \leq  x_0 \leq 0.5, \mbox{ and } -0.5-\epsilon \leq y_0 \leq -0.5,
	\end{equation}
	then the next iterate $(x_1, y_1)$ satisfies 
	\begin{subequations}
		\begin{eqnarray} \label{pert1}
		&x_1 =x_0 +\bar{\alpha}y_0(1-2x_0^2)e^{-x_0^2 - y_0^2} \geq 0, \label{eq: Condition-1-non-zero-measure-set}\\
		&y_1 + x_1 = y_0 + \bar{\alpha}\big[x_0(1-2y_0^2)e^{-x_0^2 -y_0^2} - y_0 \big]  \nonumber\\
		&+ x_0 + \bar{\alpha}y_0(1-2x_0^2)e^{-x_0^2 - y_0^2} \geq 0. \label{eq: Condition-2-non-zero-measure-set}\
		\end{eqnarray}
	\end{subequations}
	The first condition (\ref{eq: Condition-1-non-zero-measure-set}) is satisfied when the step-size $\bar{\alpha}$ is small enough. To prove (\ref{eq: Condition-2-non-zero-measure-set}) we utilize the conditions in (\ref{eq:intial-condition-to converge-to-origin}) to obtain the following inequalities
	\begin{flalign*}
	-2\epsilon \leq & \,  x_0 + y_0 \leq 0,\\
	%-\epsilon \leq & \,  x_0 - y_0 \leq \epsilon,\\
	0.25 + (0.5 - \epsilon)^2 \leq & \, x_0^2 + y_0^2 \leq (0.5 + \epsilon)^2+ 0.25,\\
	0.5 \leq & 1-2x_0^2 \leq \, 1 - 2(0.5 - \epsilon)^2,\\
	1 - 2(0.5 + \epsilon)^2 \leq & \, 1-2y_0^2 \leq 0.5 ,
	\end{flalign*}
	which implies that
	\begin{equation}\label{eq:x_0(1-2y_0^2) + y_0(1-2x_0^2) }
	\begin{array}{ll}
	&x_0(1-2y_0^2) + y_0(1-2x_0^2) \\
	&\geq (0.5 -\epsilon)\big(1 - 2(0.5 + \epsilon)^2\big) -(0.5 +\epsilon)\big(1 - 2(0.5 - \epsilon)^2\big)\\
	&= -3\epsilon + 4\epsilon^3.
	\end{array}
	\end{equation}
	
	Then using (\ref{eq:x_0(1-2y_0^2) + y_0(1-2x_0^2) }), we get
	\[\begin{array}{l}
	x_0 + y_0 - \bar{\alpha}y_0 + \bar{\alpha}\Big[x_0(1-2y_0^2) + y_0(1-2x_0^2)\Big]e^{-x_0^2 -y_0^2}\\
	\geq -2\epsilon + 0.5\bar{\alpha} + \bar{\alpha} \big(-3\epsilon + 4\epsilon^3\big) e^{-0.25}e^{-(0.5 + \epsilon)^2}.
	\end{array}\]
	Note that the right hand side is $0.5\bar{\alpha } + {\cal O}(\epsilon)$ which is greater than or equal to zero for sufficiently small $\epsilon$. This shows that condition (\ref{eq: Condition-2-non-zero-measure-set}) holds, and the proof follows by Lemma~\ref{lem:conditions-converge-to-origin}.\hfill$\blacksquare$

	\section{Proof of Lemma~\ref{lm:LowerBound_UpperBound}}\label{lm:LowerBound_UpperBound-proof}
	\begin{proof}
		Consider $\bbx \in {\cal F}$ satisfying $\cX(\bbx) = 0$ and $\cpsi(\bbx) \neq 0$. By the convexity of ${\cal F}$, we get
		\[\left\langle \nabla f(\bbx) , \bx - \bbx \right\rangle \geq 0 \quad \forall \,\, \bx \in {\cal F},\]
		Moreover, using Assumption~\ref{Assumption2}, there exists $\bx^* \in \ball(\bbx, 1) \, \cap \, {\cal F}$ such that  
		\begin{equation}\label{eq:Existence_Of_x*}
		-(\bx^* - \bbx)^T \nabla f(\bx) (\bx^* - \bbx) \geq \beta \quad \mbox{and} \quad 	\left\langle \nabla f(\bbx), \bx^* - \bbx\right\rangle =0.
		\end{equation}
		
		Let $\btx \in \ball(\bbx, \delta)$. We define 
		\[\btx^* = \begin{cases} \begin{array}{ll} \bx^* & \quad \mbox{if } \|\btx - \bx^*\|_2 \leq 1\\
		\btx + \dfrac{(\bx^* - \btx)}{\|\bx^* - \btx \|_2} & \quad \mbox{otherwise} \end{array}\end{cases}.\]
		We first show~\eqref{UpperBound} by considering the following two cases.\\
		
		$\bullet$ \underline{Case when $\|\btx - \bx^*\|_2 \leq 1$:}\\		
		\begin{equation}\label{eq:UpperBound_X_k_case_1}
		\begin{array}{ll}
		\left\langle \nabla f(\btx) , \btx^* - \btx \right\rangle &= \left\langle \nabla f(\btx) , \btx^* - \bbx \right\rangle + \left\langle \nabla f(\btx) , \bbx - \btx \right\rangle  \\
		\\
		&= \left\langle \nabla f(\bbx) , \bx^* - \bbx \right\rangle + \left\langle \nabla f(\btx) - \nabla f(\bbx) , \bx^* - \bbx \right\rangle \\
		&\quad +  \left\langle \nabla f(\btx) , \bbx - \btx \right\rangle \\
		\\ 
		& \leq L \|\bx^* - \bbx\|_2 \|\btx - \bbx\|_2 + g_{max} \|\bbx - \btx\|_2\\
		\\
		& \leq (L + g_{max}) \delta,\end{array}
		\end{equation}
		where the first inequality holds by~\eqref{eq:Existence_Of_x*}, Cauchy Shwartz, and $L$-Lipschitzness of $\nabla f(\cdot)$. The second inequality holds since $\|\bx^* - \bbx\|_2 \leq 1$ and $\|\btx - \bbx\|_2 \leq \delta$. \\
		\\
		\\
		
		$\bullet$ \underline{Case when $\|\btx - \bx^*\|_2 > 1$:}\\
		\begin{equation}\label{eq:UpperBound_X_k_case_2}
		\begin{array}{ll}
		\left\langle \nabla f(\btx) , \btx^* - \btx \right\rangle &= \left\langle \nabla f(\btx) , \btx^* - \bbx \right\rangle + \left\langle \nabla f(\btx) , \bbx - \btx \right\rangle  \\
		\\
		& = \left\langle \nabla f(\bbx) , \btx - \bbx \right\rangle + \left\langle \nabla f(\bbx) , \dfrac{\bx^* - \btx}{\|\bx^* - \btx\|_2} \right\rangle 
		 + \underbrace{\left\langle \nabla f(\btx) - \nabla f(\bbx) , \btx + \dfrac{\bx^* - \btx}{\|\bx^* - \btx\|_2} - \bbx \right\rangle}_{\leq L\|\btx - \bbx\|_2^2 + L\|\btx - \bbx\|_2}\\
		&\quad +  \underbrace{\left\langle \nabla f(\btx) , \bbx - \btx \right\rangle}_{g_{max}\|\bbx - \btx\|_2}\\
		\\
		& \leq  g_{max} \underbrace{\| \btx - \bbx\|_2}_{\leq \delta} + \underbrace{\left\langle \nabla f(\bbx) , \dfrac{\bx^* - \bbx}{\|\bx^* - \btx\|_2} \right\rangle}_{= 0} + \underbrace{\left\langle \nabla f(\bbx), \dfrac{\bbx - \btx}{\|\bx^* - \btx\|_2} \right\rangle}_{\leq g_{max}\delta} \\
		&\quad + L\underbrace{\|\btx - \bbx\|_2^2}_{\leq \delta^2} + L\underbrace{\|\btx - \bbx\|_2}_{\leq \delta} + g_{max}\|\bbx - \btx\|_2,\\
		\\
		&\leq (L + 3g_{max})\delta + L \delta^2,
		\end{array}
		\end{equation}
		where the second equality holds by our definition of $\btx^*$, the first inequality holds by Cauchy Shwartz and the $L$-Lipschitzness of $\nabla f(\cdot)$, and the second inequality holds by~\eqref{eq:Existence_Of_x*} and since 
		\[1 \leq \|\bx^* - \btx\|_2 \leq \|\bx^* - \bbx\|_2 + \|\bbx - \btx\|_2 \leq 1+\delta.\]
		Here $\|\bx^* - \bbx\|_2 \leq 1$ since $\bx^* \in \ball(\bbx, 1)$ and $\|\bbx - \btx\|_2 \leq \delta$ by our choice of $\btx$. Combining~\eqref{eq:UpperBound_X_k_case_1} and \eqref{eq:UpperBound_X_k_case_2}, we get that
		\[\left\langle \nabla f(\btx) , \btx^* - \btx \right\rangle \leq (L+3g_{max})\delta + L\delta^2.\]
		
		Similarly, we show~\eqref{LowerBound} by considering the following two cases.\\
		
		$\bullet$ \underline{Case when $\|\btx - \bx^*\|_2 \leq 1$:}\\
		\begin{equation}\label{eq:UpperBound_Psi_k_case_1}
		\begin{array}{ll}
		-(\btx^* - \btx)^T \nabla^2 f(\btx)(\btx^* - \btx) & = -(\btx^* - \bbx + \bbx - \btx)^T \nabla^2 f(\btx) (\btx^* - \bbx + \bbx - \btx)\\
		\\
		& = -(\btx^* - \bbx)^T \nabla^2 f(\btx) (\btx^* - \bbx) - 2(\bbx - \btx)^T \nabla^2 f(\btx) (\btx^* - \bbx)  \\
		&\quad- (\bbx - \btx)^T \nabla^2f(\btx) (\bbx - \btx)\\
		\\
		& = -(\bx^* - \bbx)^T \nabla^2 f(\btx) (\bx^* - \bbx) - \underbrace{2(\bbx - \btx)^T \nabla^2 f(\btx) (\bx^* - \bbx)}_{\geq -2H_{max}\|\bx^* - \bbx\|\delta}  \\
		&\quad- \underbrace{(\bbx - \btx)^T \nabla^2f(\btx) (\bbx - \btx)}_{\geq -H_{max}\delta^2}\\
		\\		
		& \geq \underbrace{-(\bx^* - \bbx)^T \nabla^2 f(\bbx) (\bx^* - \bbx)}_{\beta} -2(\bx^* - \bbx)^T\left(\nabla^2 f(\btx) - \nabla^2 f(\bbx) \right)  (\bx^* - \bbx)   \\
		&\quad - 2H_{max}\|\bx^* - \bbx\|_2\delta -H_{max}\delta^2 \\
		\\
		&\geq \beta - \rho\delta - 2H_{max}\delta - H_{max}\delta^2
		,\end{array}
		\end{equation}
		where the first inequality holds by Cauchy Shwartz and since $\|\btx - \bbx\|_2 \leq \delta$. The second inequality holds by Assumption~\ref{Assumption2}, $\rho$-Lipschitzness of $\nabla^2 f(\cdot)$, and since $\|\bx^* - \bbx\|_2 \leq 1$. 
		\newpage
		
		$\bullet$ \underline{Case when $\|\btx - \bx^*\|_2 > 1$:}\\
		\begin{equation}\label{eq:UpperBound_Psi_k_case_2}
		\begin{array}{ll}
		-(\btx^* - \btx)^T \nabla^2 f(\btx)(\btx^* - \btx) & = -(\btx^* - \bbx + \bbx - \btx)^T \nabla^2 f(\btx) (\btx^* - \bbx + \bbx - \btx)\\
		\\
		& = -(\btx^* - \bbx)^T \nabla^2 f(\btx) (\btx^* - \bbx) - 2(\bbx - \btx)^T \nabla^2 f(\btx) (\btx^* - \bbx)  \\
		&\quad- (\bbx - \btx)^T \nabla^2f(\btx) (\bbx - \btx)\\
		\\
		& = -\left(\btx - \bbx + \dfrac{\bx^* - \btx}{\|\bx^* - \btx\|_2}\right)^T \nabla^2 f(\btx) \left(\btx - \bbx + \dfrac{\bx^* - \btx}{\|\bx^* - \btx\|_2}\right) \\
		&\quad - 2(\bbx - \btx)^T \nabla^2 f(\btx) \left(\btx - \bbx + \dfrac{\bx^* - \btx}{\|\bx^* - \btx\|_2}\right)  \\
		&\quad- (\bbx - \btx)^T \nabla^2f(\btx) (\bbx - \btx)\\
		\\		
		& \geq \underbrace{-\left(\btx - \bbx\right)^T \nabla^2 f(\btx) \left(\btx - \bbx \right)}_{\geq -H_{max}\delta^2} - \left(\dfrac{\bx^* - \btx}{\|\bx^* - \btx\|_2}\right)^T \nabla^2 f(\btx) \left(\dfrac{\bx^* - \btx}{\|\bx^* - \btx\|_2}\right)\\
		&\quad - 2(\btx - \bbx)^T \nabla^2 f(\btx) \left(\dfrac{\bx^* - \btx}{\|\bx^* - \btx\|_2}\right)  -2(\bbx - \btx)^T \nabla^2 f(\btx) (\btx - \bbx)\\
		&\quad  + 2(\btx - \bbx)^T \nabla^2 f(\btx) \dfrac{(\bx^* - \btx)}{\|\bx^* - \btx\|_2} - H_{max}\delta^2\\
		\\
		& \geq -4H_{max}\delta^2  - \left(\dfrac{\bx^* - \btx}{\|\bx^* - \btx\|_2}\right)^T \nabla^2 f(\bbx) \left(\dfrac{\bx^* - \btx}{\|\bx^* - \btx\|_2}\right)\\
		&\quad - \left(\dfrac{\bx^* - \btx}{\|\bx^* - \btx\|_2}\right)^T \left( \nabla^2 f(\btx) - \nabla^2 f(\bbx) \right)  \left(\dfrac{\bx^* - \btx}{\|\bx^* - \btx\|_2}\right)\\
		\\
		& \geq -4H_{max}\delta^2  - \rho \delta - \underbrace{\left(\dfrac{\bx^* - \bbx}{\|\bx^* - \btx\|_2}\right)^T \nabla^2 f(\bbx) \left(\dfrac{\bx^* - \bbx}{\|\bx^* - \btx\|_2}\right)}_{\geq \beta/(1+\delta)^2}\\
		&\quad - \left(\dfrac{\bbx - \btx}{\|\bx^* - \btx\|_2}\right)^T \nabla^2 f(\bbx)  \left(\dfrac{\bbx - \btx}{\|\bx^* - \btx\|_2}\right) \\
		&\quad - 2\left(\dfrac{\bx^* - \bbx}{\|\bx^* - \btx\|_2}\right)^T \nabla^2 f(\bbx)  \left(\dfrac{\bbx - \btx}{\|\bx^* - \btx\|_2}\right)\\
		\\
		&\geq \dfrac{\beta}{(1+\delta)^2} -\rho\delta  - 5H_{max}\delta^2 - 2H_{max}\delta\\
		,\end{array}
		\end{equation}
		where the third equality holds by our definition of $\btx^*$, and the first and second inequalities hold by Cauchy Shwartz, Assumption~\ref{Assumption1} and since $\|\bbx - \btx\|_2 \leq \delta$. Here the third and fourth inequalities hold by $\rho$-Lipschitzness of $\nabla^2 f(\cdot)$, Cauchy Shwartz and Assumption~\ref{Assumption2}. Combining~\eqref{eq:UpperBound_Psi_k_case_1} and \eqref{eq:UpperBound_Psi_k_case_2}, we get that
		\[	-(\btx^* - \btx)^T \nabla^2 f(\btx)(\btx^* - \btx) \geq \dfrac{\beta}{(1+\delta)^2} -\rho\delta  - 5H_{max}\delta^2 - 2H_{max}\delta.\]
		
	\end{proof}	

\end{document}

% --- supplement: ex_supplement.tex ---

\maketitle

\section{A detailed example}

Here we include some equations and theorem-like environments to show
how these are labeled in a supplement and can be referenced from the
main text.
Consider the following equation:
\begin{equation}
  \label{eq:suppa}
  a^2 + b^2 = c^2.
\end{equation}
You can also reference equations such as \cref{eq:matrices,eq:bb} 
from the main article in this supplement.

\lipsum[100-101]

\begin{theorem}
  An example theorem.
\end{theorem}

\lipsum[102]
 
\begin{lemma}
  An example lemma.
\end{lemma}

\lipsum[103-105]

Here is an example citation: \cite{KoMa14}.

\section[Proof of Thm]{Proof of \cref{thm:bigthm}}
\label{sec:proof}

\lipsum[106-112]

\section{Additional experimental results}
\Cref{tab:foo} shows additional
supporting evidence. 

\begin{table}[htbp]
{\footnotesize
  \caption{Example table}  \label{tab:foo}
\begin{center}
  \begin{tabular}{|c|c|c|} \hline
   Species & \bf Mean & \bf Std.~Dev. \\ \hline
    1 & 3.4 & 1.2 \\
    2 & 5.4 & 0.6 \\ \hline
  \end{tabular}
\end{center}
}
\end{table}

\bibliographystyle{siamplain}
\bibliography{references}